\newcommand{\tikzAngleOfLine}{\tikz@AngleOfLine}
\def\tikz@AngleOfLine(#1)(#2)#3{%
\pgfmathanglebetweenpoints{%
\pgfpointanchor{#1}{center}}{%
\pgfpointanchor{#2}{center}}
\pgfmathsetmacro{#3}{\pgfmathresult}%
}
\newcommand{\End}{\operatorname{End}}
\newcommand{\Hom}{\operatorname{Hom}}
\newcommand{\Ext}{\operatorname{Ext}}
\newcommand{\Tor}{\operatorname{Tor}}
\newcommand{\add}{\!\operatorname{add}}
\newcommand{\pdim}{\operatorname{pdim}}
\newcommand{\m}{\!\operatorname{-mod}}
\newcommand{\rmod}{\operatorname{mod-}\!\!}
\newcommand{\proj}{\!\operatorname{-proj}}
\newcommand{\mi}{\mathfrak{m}}
\newcommand{\injdim}{\operatorname{idim}}
\newcommand{\domdim}{\operatorname{domdim}}
\newcommand{\codomdim}{\operatorname{codomdim}}
\newcommand{\hy}{\!\operatorname{-}\!}
\newtheorem{numberingthm}{Theorem}[section] 
\theoremstyle{definition}
\newtheorem{Def}[numberingthm]{Definition}
\theoremstyle{plain}
\newtheorem{Prop}[numberingthm]{Proposition}
\newtheorem{Theorem}[numberingthm]{Theorem}
\newtheorem{Conjecture}[numberingthm]{Conjecture}
\newtheorem{Cor}[numberingthm]{Corollary}
\newtheorem{Lemma}[numberingthm]{Lemma}
\newtheorem{Remark}[numberingthm]{Remark}
\theoremstyle{remark}
\theoremstyle{empty}
\newtheorem*{thmintroduction}{Theorem}
\begin{document}

\title{Higher Morita--Tachikawa correspondence}

\author[T. Cruz]{Tiago Cruz}
\address[Tiago Cruz]{
Max-Planck-Institut f\"ur Mathematik, Vivatsgasse 7, 53111 Bonn, Germany}
\email{tiago.cruz@mathematik.uni-stuttgart.de}

\subjclass[2020]{Primary: 16E10, Secondary: 16G10, 16G30}
\keywords{Morita--Tachikawa correspondence, generators, quasi-generators, quasi-cogenerators}

\begin{abstract}
	Important correspondences in representation theory can be regarded as restrictions of the Morita--Tachikawa correspondence. Moreover, this correspondence motivates the study of many classes of algebras like Morita algebras and gendo-symmetric algebras. Explicitly, the Morita--Tachikawa correspondence describes that endomorphism algebras of generators-cogenerators over finite-dimensional algebras are exactly the finite-dimensional algebras with dominant dimension at least two.
	
	In this paper, we introduce the concepts of quasi-generators and quasi-cogenerators which generalise generators and cogenerators, respectively.
	Using these new concepts, we present higher versions of the Morita--Tachikawa correspondence that takes into account relative dominant dimension with respect to a self-orthogonal module with arbitrary projective and injective dimension. These new versions also hold over Noetherian algebras which are finitely generated and projective over a commutative Noetherian ring.
\end{abstract}

\maketitle

\section{Introduction}
The celebrated Morita--Tachikawa correspondence connects basic finite-dimensional algebras having dominant dimension at least two with pairs consisting of a basic finite-dimensional algebra and a multiplicity-free generator-cogenerator (see for example \citep[Theorem 2]{zbMATH03248955}). 
An important specialisation of this correspondence is the famous Auslander correspondence \cite{zbMATH03517355} which marks the starting point for the use of Auslander and Reiten ideas in representation theory.
Not only the Auslander's correspondence but also many known correspondences like Iyama's higher Auslander correspondence \cite{zbMATH05132550} and Iyama--Solberg's correspondence \cite{zbMATH06833443} are also specialisations of the Morita--Tachikawa correspondence. 
 Moreover, algebras arising from these correspondences have appeared in many different areas like cluster theory, homological algebra, algebraic Lie theory to name a few (see \cite{KSX} and for instance see also \cite{CIM} and the references therein). The Morita--Tachikawa correspondence also brought interest to define and study many new classes of algebras for example: Morita algebras \cite{zbMATH06264368}, gendo-Frobenius algebras \cite{Yirtici} and gendo-symmetric algebras \cite{zbMATH05969535} as the counterparts through Morita--Tachikawa correspondence of self-injective, Frobenius and symmetric algebras, respectively. See also \cite{zbMATH06409569} for the counterparts of Gorenstein algebras under the Morita--Tachikawa correspondence. Recently, these correspondences started to make appearances also in the context of exact categories \cite{zbMATH07514694, ebrahimi2021higher, Grevstad}.

On the other hand, this kind of correspondences are intimately connected with the theory of dominant dimension. Recently, there has been growing interest in relative dominant dimension as a tool to study the existence and uniqueness of certain tilting modules (see \cite{zbMATH06685118, AT, CP} and the references therein) and to study the representation theory of endomorphism algebras of summands of a characteristic tilting module over quasi-hereditary algebras (see for example \cite{KSX, Cr2, CE}. Moreover, such developments provide evidence that the theory of relative dominant dimension is still in its infancy stage. In particular, the theory needs a relative analogue to the Morita--Tachikawa correspondence. 
\\
Going from classical dominant dimension to relative dominant dimension, allows for example to recover properties that are intrinsic to minimal Auslander--Gorenstein algebras into more general finite-dimensional  Gorenstein  algebras equipped with a self-orthogonal module having nice properties. Such pairs were called relative Auslander--Gorenstein pairs in \cite{CP}. 
But, so far, no relative Auslander correspondence is known for these pairs aside from a special case when the sum of the projective dimension with the injective dimension of the self-orthogonal module is at most one (see \cite{LiZhang}).

Therefore, we should expect the existence of a relative Morita--Tachikawa correspondence that takes into account the relative dominant dimension and at the same time can be used in future work for example to discover a relative analogue of the Iyama--Solberg's correspondence  to characterise relative Auslander--Gorenstein pairs.

In this paper, we make this expectation precise and we develop higher versions of the Morita--Tachikawa correspondence. To do this, it is essential to introduce higher versions of generators and cogenerators and to observe that the Morita--Tachikawa correspondence is in itself the intersection of two correspondences. Generators of a module category are the modules that contain all the indecomposable projective modules as direct summands, and cogenerators are their dual concept.
For each non-negative integer $n$, we can consider a higher $n$-version of generator that we propose to call $n$-quasi-generator to generalise the concept of generator of a module category. In this terminology, the generators are exactly the $0$-quasi-generators. Similarly, we can consider the dual concept $n$-quasi-cogenerator.  In the development of what is now called Morita theory, Morita observed that projectives and generators form a sort of dichotomy. Indeed Morita observed that any projective module affording a double centraliser property is a generator over its endomorphism algebra. In the same spirit, $n$-quasi-generators over finite-dimensional algebras over a field arise from the study of self-orthogonal modules with projective dimension $n$ affording a double centraliser property. More precisely, using $n$-quasi-generators instead of just generators we obtain the following correspondence:

\begin{thmintroduction}[see Theorem~\ref{newMoritatheorem}]
	Let $k$ be a field. For every non-negative integer $n$, there are bijections $\Phi$ and $\Psi$ between:
	$$\Gamma:= \left\{ \begin{array}{@{}c@{}}(B, M)\colon \begin{array}{@{}c@{}} B \text{ is a finite-dimensional algebra over $k$ } \\
			M \text{ is an $n$-quasi-generator of } \rmod B 
	\end{array} \end{array} \right\} $$ and
$$\Lambda:= \left\{ \begin{array}{@{}c@{}}(A, M)\colon \begin{array}{@{}c@{}} A \text{ is a finite-dimensional algebra over $k$, } \\
		M \text{ is a finitely generated $A$-module with projective dimension exactly $n$, }\\
		\text{without self-extensions, that is, } \Ext_A^{i>0}(M, M)=0 \text{ satisfying}\\
		M\hy\domdim A\geq 2
\end{array} \end{array} \right\} $$
	given as follows:\\
\begin{minipage}{.35\linewidth}
	\begin{align*}
		 \Gamma&\xrightarrow{\makebox[1.3cm]{$\Phi$}}  \Lambda\\
		 (B, M)&\mapsto (\End_B(M), M)
	\end{align*}
\end{minipage}\hfill and \hfill
\begin{minipage}{.35\linewidth}
	\begin{align*}
	 \Lambda&\xrightarrow{\makebox[1.3cm]{$\Psi$}}   \Gamma\\
		 (A, M)&\mapsto (\End_A(M)^{op}, M)
	\end{align*}
\end{minipage}. 
\end{thmintroduction}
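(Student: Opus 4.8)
The plan is to prove that $\Phi$ and $\Psi$ are well-defined and mutually inverse. Everything rests on one elementary device: if $A=\End_B(M)$, then the contravariant functors $\Hom_B(-,M)$ and $\Hom_A(-,M)$ restrict to mutually quasi-inverse dualities between $\add M_B$ and $\add{}_AA$ (one has $\Hom_B(M^{i},M)\in\add{}_AA$, $\Hom_A(A^{j},M)\cong M_B^{j}$, and the relevant evaluation maps are isomorphisms on these additive subcategories, using nothing but $A=\End_B(M)$). The second ingredient is that an $n$-quasi-generator $M$ of $\rmod B$ comes equipped with a distinguished exact sequence
\[
0\to B\to M^{0}\to M^{1}\to\cdots\to M^{n}\to0,\qquad M^{i}\in\add M_B,
\]
of minimal such length $n$, which stays exact after applying $\Hom_B(-,M)$; I would begin by recording its immediate consequences, in particular that $M_B$ is faithful since $B$ embeds into $\add M_B$.

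For $\Phi$ to be well-defined, fix $(B,M)\in\Gamma$ and set $A=\End_B(M)$, a finite-dimensional $k$-algebra; view $M$ as the finitely generated left $A$-module $\Hom_B(B,M)$. Applying $\Hom_B(-,M)$ to the distinguished sequence produces a projective resolution $0\to\Hom_B(M^{n},M)\to\cdots\to\Hom_B(M^{0},M)\to{}_AM\to0$, whence $\pdim_A M\le n$. Applying $\Hom_A(-,M)$ to this resolution and using the duality to identify its terms gives, up to isomorphism of complexes, the complex $0\to M^{0}\to M^{1}\to\cdots\to M^{n}\to0$ in degrees $0,\dots,n$; since the distinguished sequence is exact, this complex has cohomology concentrated in degree $0$ with value $B$, which yields at once $\Ext^{i}_A(M,M)=0$ for all $i>0$ and the double centralizer property $\End_A(M)^{op}\cong B$. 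Armed with these, one checks that \emph{every} projective resolution of ${}_AM$ transports backwards, through $\Hom_A(-,M)$, into a $\Hom_B(-,M)$-exact sequence of the distinguished shape and the same length, so minimality of $n$ forces $\pdim_A M=n$ exactly. Finally, for $M\hy\domdim A\ge2$, transport a projective presentation $Q_{1}\to Q_{0}\to M_B\to0$ through $\Hom_B(-,M)$: this gives an exact sequence $0\to{}_AA\to\Hom_B(Q_{0},M)\to\Hom_B(Q_{1},M)$ whose last two terms lie in $\add{}_AM$ (because $\Hom_B(-,M)$ carries projective right $B$-modules into $\add{}_AM$), and $\Hom_A(-,M)$ together with the double centralizer property shows it stays exact; the characterization of relative dominant dimension at least two then gives $M\hy\domdim A\ge2$. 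Hence $(A,M)\in\Lambda$.

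For $\Psi$ to be well-defined, fix $(A,M)\in\Lambda$ and set $B=\End_A(M)^{op}$, a finite-dimensional $k$-algebra; view $M$ as a right $B$-module. Choose a minimal projective resolution $0\to P_{n}\to\cdots\to P_{0}\to{}_AM\to0$, of length exactly $n$ since $\pdim_A M=n$. Because $\Ext^{i>0}_A(M,M)=0$, applying $\Hom_A(-,M)$ yields an exact sequence $0\to B\to\Hom_A(P_{0},M)\to\cdots\to\Hom_A(P_{n},M)\to0$ with every term but the first in $\add M_B$. That it stays exact under $\Hom_B(-,M)$ — the outstanding clause of "$n$-quasi-generator" — follows because $M\hy\domdim A\ge2$ makes the canonical map $A\to\End_{\End_A(M)^{op}}(M)$ an isomorphism, so the evaluation maps $P_{i}\to\Hom_B(\Hom_A(P_{i},M),M)$ are isomorphisms and $\Hom_B(-,M)$ returns the original projective resolution, which is exact. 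The same double-centralizer transport shows no sequence of this shape of smaller length exists, so $n$ is minimal, and $M_B$ is an $n$-quasi-generator; hence $(B,M)\in\Gamma$.

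It remains to verify $\Psi\circ\Phi=\id_\Gamma$ and $\Phi\circ\Psi=\id_\Lambda$; in both composites the module is left untouched, so one only needs the endomorphism-algebra constructions to cancel, i.e. the two canonical isomorphisms $\End_{\End_B(M)}(M)^{op}\cong B$ and $\End_{\End_A(M)^{op}}(M)\cong A$ — the first is the double centralizer property obtained in the $\Phi$ step, and the second is exactly the isomorphism furnished by $M\hy\domdim A\ge2$ in the $\Psi$ step. I expect the main obstacle to be the relative dominant dimension bookkeeping: reconciling the homological data extracted above — a $\Hom_A(-,M)$-exact $\add{}_AM$-coresolution of ${}_AA$ of length at least two, together with the double centralizer property — with the precise definition of $M\hy\domdim A\ge2$, and excluding an off-by-one in the value of $n$; this is where the structural results on relative dominant dimension developed earlier must be invoked with care.
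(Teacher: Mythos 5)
Your proposal is correct and follows essentially the same strategy as the paper's proof of Theorem~\ref{newMoritatheorem}: transport the distinguished coresolution (resp.\ a projective resolution) through $\Hom_B(-,M)$ (resp.\ $\Hom_A(-,M)$), deduce exactness of the transported sequence from the evaluation maps being isomorphisms on $\add M_B$ and $\add{}_AA$, and obtain minimality of $n$ and the mutual inversity of $\Phi,\Psi$ from the double centraliser property exactly as in the paper. The only differences are presentational: you treat the field case stated in the introduction and replace the paper's appeals to the $\Tor$-characterisation of relative dominant dimension (\citep[Theorem 3.1.4]{Cr2}, via $D$ and the maps $\chi^r$) with direct, elementary evaluation-map arguments, which is legitimate since over a field the conditions $M\otimes_A DM\in R\proj$ and $\Tor^A_{i>0}(DM,M)=0$ reduce to $\Ext_A^{i>0}(M,M)=0$.
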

Observe that $\Gamma\cap \Lambda$ is the class of $n$-tilting modules over a finite-dimensional algebra over a field. By fixing $n=0$, this intersection $\Gamma\cap \Lambda$ is the class of projective generators which are the main objects of study in Morita theory. 
The modules $M$ satisfying $\Ext_A^{i>0}(M, M)=0$ are known in the literature as self-orthogonal modules.

The correspondence in Theorem \ref{newMoritatheorem} restricts to a higher version of the classical Morita--Tachikawa correspondence when combined with its dual version. Indeed, classical Morita--Tachikawa correspondence corresponds to the case $n=m=0$ in the following correspondence.
\begin{thmintroduction}[see Theorem~\ref{RelativeMoritaTachikawa}]
	Let $k$ be a field.	For every pair of non-negative integers $n$ and $m$, there is a one-to-one correspondence between:
	$$\Gamma:= \left\{ \begin{array}{@{}c@{}}(B, M)\colon \begin{array}{@{}c@{}} B \text{ is a finite-dimensional algebra over $k$ } \\
			M \text{ is an $n$-quasi-generator of } \rmod B \\
			M \text{ is an $m$-quasi-cogenerator of } \rmod B 
	\end{array} \end{array} \right\} $$ and
	$$\Lambda:= \left\{ \begin{array}{@{}c@{}}(A, M)\colon \begin{array}{@{}c@{}} A \text{ is a finite-dimensional algebra over $k$, } \\
			M \text{ is a finitely generated $A$-module with projective dimension exactly $n$, }\\
			\text{and injective dimension exactly $m$,}\\
			\text{without self-extensions, that is, } \Ext_A^{i>0}(M, M)=0 \text{ satisfying}\\
			M\hy\domdim A\geq 2
	\end{array} \end{array} \right\}. $$
\end{thmintroduction}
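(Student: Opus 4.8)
The strategy is to recognise Theorem~\ref{RelativeMoritaTachikawa} as the intersection of Theorem~\ref{newMoritatheorem} with its dual, exactly as the classical Morita--Tachikawa correspondence is the intersection of the two one-sided correspondences. Let $D=\Hom_k(-,k)$ denote the standard duality, which restricts to an exact contravariant equivalence between $\rmod B$ and $B\m=\rmod B^{op}$ for every finite-dimensional $k$-algebra $B$; it exchanges indecomposable projectives with indecomposable injectives, turns a minimal projective resolution into a minimal injective coresolution and vice versa (hence exchanges projective and injective dimension), preserves self-orthogonality, and satisfies $\End_{B^{op}}(DM)\cong\End_B(M)^{op}$ for every $M\in\rmod B$. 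By definition, $M$ is an $m$-quasi-cogenerator of $\rmod B$ precisely when $DM$ is an $m$-quasi-generator of $\rmod B^{op}$. Applying Theorem~\ref{newMoritatheorem} to the pair $(B^{op},DM)$ and transporting the conclusion along $D$ --- using also that relative dominant dimension with respect to a self-orthogonal module is left--right symmetric --- yields the \emph{dual} of Theorem~\ref{newMoritatheorem}: the assignments $(B,M)\mapsto(\End_B(M),M)$ and $(A,M)\mapsto(\End_A(M)^{op},M)$ are mutually inverse bijections between the pairs $(B,M)$ with $M$ an $m$-quasi-cogenerator of $\rmod B$ and the pairs $(A,M)$ with $M$ a finitely generated $A$-module of injective dimension exactly $m$ satisfying $\Ext_A^{i>0}(M,M)=0$ and $M\hy\domdim A\geq 2$.

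With Theorem~\ref{newMoritatheorem} and its dual available, define $\Phi\colon\Gamma\to\Lambda$ and $\Psi\colon\Lambda\to\Gamma$ by the very same formulas $\Phi(B,M)=(\End_B(M),M)$ and $\Psi(A,M)=(\End_A(M)^{op},M)$. To see that $\Phi$ is well defined, take $(B,M)\in\Gamma$ and put $A=\End_B(M)$. Because $M$ is an $n$-quasi-generator of $\rmod B$, Theorem~\ref{newMoritatheorem} shows that $M$ is a finitely generated $A$-module of projective dimension exactly $n$ with $\Ext_A^{i>0}(M,M)=0$ and $M\hy\domdim A\geq 2$. Because $M$ is moreover an $m$-quasi-cogenerator of $\rmod B$ and the endomorphism algebra occurring in the dual statement is again $\End_B(M)=A$, the dual statement shows in addition that $M$ has injective dimension exactly $m$ as an $A$-module. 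Hence $\Phi(B,M)=(A,M)\in\Lambda$.

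Symmetrically, given $(A,M)\in\Lambda$ and setting $B=\End_A(M)^{op}$, the hypothesis that $M$ has projective dimension exactly $n$ together with the map $\Psi$ of Theorem~\ref{newMoritatheorem} shows that $M$ is an $n$-quasi-generator of $\rmod B$, while the hypothesis that $M$ has injective dimension exactly $m$ together with the map $\Psi$ of the dual statement shows that $M$ is an $m$-quasi-cogenerator of $\rmod B$; thus $\Psi(A,M)=(B,M)\in\Gamma$. Finally $\Phi$ and $\Psi$ are mutually inverse, since after forgetting the extra quasi-(co)generator decoration they agree with the mutually inverse bijections of Theorem~\ref{newMoritatheorem} (equivalently, of its dual).

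The routine part of the argument is the bookkeeping of one-sided module structures and opposite algebras inside the duality $D$. The one point that genuinely requires attention is consistency of the two correspondences on the module $M$: one must check that the algebra $\End_B(M)$ produced from the $n$-quasi-generator side is literally the algebra over which the $m$-quasi-cogenerator hypothesis is read (so that the bounds $\pdim_A M=n$ and $\injdim_A M=m$ live over one and the same $A$), and that the condition $M\hy\domdim A\geq 2$ is obtained compatibly from either side --- which is where the left--right symmetry of relative dominant dimension enters. Everything else is immediate from Theorem~\ref{newMoritatheorem} and the exactness of $D$.
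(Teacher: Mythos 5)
Your proposal is correct and follows essentially the same route as the paper: the paper proves the dual correspondence (Theorem~\ref{newdualMoritatheorem}) by transporting Theorem~\ref{newMoritatheorem} along the duality $D$ via the bijections $(B,M)\mapsto(B^{op},DM)$ and $(A,M)\mapsto(A^{op},DM)$ (using Lemma~\ref{lemma2dot8}, Remark~\ref{rmk2dot7} and the left--right symmetry of relative dominant dimension), and then obtains Theorem~\ref{RelativeMoritaTachikawa} as the intersection of the two correspondences, exactly as you do. The only cosmetic difference is that the paper's general statement is phrased over a commutative Noetherian ring with the conditions $DM\otimes_AM\in R\proj$ and $\Tor_{i>0}^A(DM,M)=0$, which over a field reduce to the self-orthogonality condition $\Ext_A^{i>0}(M,M)=0$ appearing in the version you proved.
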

Both Theorem \ref{RelativeMoritaTachikawa} and Theorem \ref{newMoritatheorem} are actually formulated for Noetherian algebras which are finitely generated and projective over a commutative Noetherian ring.

In the end, we explore the link between self-orthogonal quasi-generators and the homological conjectures. In particular, we prove that there exists an intermediate conjecture between the Wakamatsu tilting conjecture and the Auslander--Reiten conjecture involving self-orthogonal quasi-generators.

\section{Preliminaries}

Throughout the paper, $R$ is a commutative Noetherian ring with identity and $A$ is a \textbf{projective Noetherian $R$-algebra}, that is, an associative $R$-algebra so that $A$ is finitely generated projective as $R$-module.
By $A\m$ we denote the category of finitely generated left $A$-modules and by $\rmod A$ the category of finitely generated right $A$-modules. We write $A\proj$ to denote the full subcategory of $A\m$ whose modules are projective.
Given $M\in A\m$ (or $M\in \rmod A$) we denote by $\add_A M$ the full subcategory of $A\m$ (resp. $\rmod A$) whose modules are direct summands of direct sums of $M$ over $A$. 
By $A^{op}$ we mean the opposite algebra of $A$ and by $D$ the \textbf{standard duality functor} $\Hom_R(-, R)\colon A\m\rightarrow A^{op}\m$. 
Given $M\in A\m$, we denote by $\End_A(M)$ the endomorphism algebra of $M$ so that the multiplication $fg$ is the composition $f\circ g$ of $g$ and $f$ for $f, g\in \End_A(M)$. By $\pdim_A M$ we mean the projective dimension of the $M$ over $A$.
By an \textbf{$(A, R)$-exact sequence} we mean an exact sequence of $A$-modules which splits as
sequence of $R$-modules. We say that $M\in A\m$ is a \textbf{self-orthogonal modul}e if $\Ext_A^{i>0}(M, M)=0$.

\subsection{Relative dominant dimension}
Given $Q\in A\m\cap R\proj$ we say that $Q$ affords a \textbf{double centraliser property} if the canonical map of $R$-algebras $A\rightarrow \End_B(Q)$ is an isomorphism of $R$-algebras, where $B=\End_A(Q)^{op}$.

A useful tool to detect the existence of double centraliser properties is relative dominant dimension. We shall now recall the definition of relative dominant dimension and some of its technicalities following \cite{Cr2} and \cite{CRUZ2022410}.

\begin{Def}
	Let $M, Q\in A\m\cap R\proj$. The \textbf{relative dominant dimension of $M$ with respect to $Q$}, which we will denote by $Q\hy\domdim_{(A, R)}M$, is greater or equal than $n\in \mathbb{N}\cup \{0\}$ if there exists an $(A, R)$-exact sequence $0\rightarrow M\rightarrow Q_1\rightarrow \cdots \rightarrow Q_n$ with $Q_i\in \add_A Q$ which remains exact under $\Hom_A(-, Q)$. We write $Q\hy\domdim_{(A, R)} M=+\infty$ if $Q\hy\domdim_{(A, R)} M\geq n$ for every $n\in \mathbb{N}\cup \{0\}$. 
\end{Def}
In particular, if there exists an $(A, R)$-exact sequence $0\rightarrow M\rightarrow Q_1\rightarrow \cdots Q_n\rightarrow 0$ with $Q_i\in \add_A Q$ which remains exact under $\Hom_A(-, Q)$, then $Q\hy\domdim_{(A, R)} M=+\infty$. We will denote by $Q\hy\domdim (A, R)$ the value $Q\hy\domdim_{(A, R)} A$.
Analogously, we can consider the \textbf{relative codominant dimension of $M$ with respect to $Q$} by defining $Q\hy\codomdim_{(A, R)}M := DQ\hy\domdim_{(A^{op}, R)} DM$. We will simply write $Q\hy\domdim_A M$ whenever $A$ is a finite-dimensional algebra over a field $R$ and $Q\hy \domdim A$ instead of $Q\hy\domdim_A A$.

Over finite-dimensional algebras over a field, the classical dominant dimension is recovered from relative dominant dimension by fixing $Q$ to be a faithful projective-injective module. Similar to the classical case, relative dominant dimension also possesses some left-right symmetries, for instance $Q\hy\domdim_{(A, R)} A=Q\hy\domdim_{(A, R)} DA$ (see \citep[Corollary 3.1.5]{Cr2}).

A very important property of relative dominant dimension is that it can be characterised by the vanishing properties of certain $\Tor$ groups.

\begin{Theorem}\label{moduleMuellerparttwo}
 Let $Q\in A\m\cap R\proj$ so that $\Hom_A(Q, Q)\in R\proj$. Denote by $B$ the endomorphism algebra $ \End_A(Q)^{op}$. For $M\in A\m\cap R\proj$, $Q\hy\domdim_{(A, R)} M\geq n\geq 2$ if and only if the map $\chi^r_{DM}\colon \Hom_A(DQ, DM)\otimes_B DQ\rightarrow DM$, given by $f\otimes h\mapsto f(h)$, is an isomorphism and $\Tor_i^B(\Hom_A(DQ, DM), DQ)=\Tor_i^B(\Hom_A(M, Q), DQ)=0$, $1\leq i\leq n-2$.
\end{Theorem}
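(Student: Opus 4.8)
The plan is to reduce the statement to a known characterisation of relative dominant dimension in terms of approximations and then translate it into the $\Tor$-vanishing condition via the duality $D$ and the standard $\Hom$--$\otimes$ adjunction. First I would recall (from the Müller-type theorem referenced just before, e.g. the analogue of \citep[Theorem]{CRUZ2022410} or its relative version in \cite{Cr2}) that $Q\hy\domdim_{(A,R)}M\geq n\geq 2$ is equivalent to the existence of an $(A,R)$-exact sequence $0\to M\to Q_1\to\cdots\to Q_n$ with $Q_i\in\add_A Q$ staying exact under $\Hom_A(-,Q)$. Applying the exact duality $D\colon A\m\to A^{op}\m$ turns this into an $(A^{op},R)$-exact coresolution-type sequence $DQ_n\to\cdots\to DQ_1\to DM\to 0$ with $DQ_i\in\add_{A^{op}}DQ$ which stays exact under $\Hom_{A^{op}}(DQ,-)$; that is, the left-hand side is a $DQ$-approximation sequence for $DM$ of length $n$.

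Next I would identify $B=\End_A(Q)^{op}\cong\End_{A^{op}}(DQ)$ (using that $D$ is a duality, so $\End_A(Q)^{op}\cong\End_{A^{op}}(DQ)^{op op}=\End_{A^{op}}(DQ)$ after the appropriate bookkeeping of op's; I would spell out this identification carefully since the theorem statement uses $\Hom_A(DQ,DM)$ as a $B$-module). With $\Hom_A(Q,Q)=\Hom_{A^{op}}(DQ,DQ)\in R\proj$, the functor $\Hom_{A^{op}}(DQ,-)$ lands in $B\m\cap R\proj$ and sends $\add_{A^{op}}DQ$ into $\add_B B$. So applying $\Hom_{A^{op}}(DQ,-)$ to the approximation sequence yields an exact sequence of $B$-modules $\Hom_{A^{op}}(DQ,DQ_n)\to\cdots\to\Hom_{A^{op}}(DQ,DQ_1)\to\Hom_{A^{op}}(DQ,DM)\to 0$ in which all but the last term are projective. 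This exhibits the first $n-1$ syzygies of the $B$-module $\Hom_A(DQ,DM)$ as genuine syzygies computed from projectives, provided exactness is preserved — which is exactly the content of ``remains exact under $\Hom_A(-,Q)$'' dualised.

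Then the core computation: the counit/evaluation map $\chi^r_{DM}\colon\Hom_A(DQ,DM)\otimes_B DQ\to DM$ being an isomorphism encodes that $DM$ is ``$DQ$-generated with the expected kernel'', i.e. that the tensor functor recovers $DM$; together with $\Tor_i^B(\Hom_A(DQ,DM),DQ)=0$ for $1\le i\le n-2$, this says precisely that tensoring the projective part of the above $B$-resolution with $DQ$ over $B$ reproduces the original $(A^{op},R)$-exact sequence without introducing homology in degrees $1,\dots,n-2$. Concretely, I would run the standard argument: applying $-\otimes_B DQ$ to $\Hom_{A^{op}}(DQ,DQ_i)\cong$ (projective $B$-module) gives back $DQ_i$ via the natural iso $P\otimes_B DQ\cong DQ\otimes_{} (\text{coordinates})$, and then a dimension-shift / long-exact-sequence induction on $n$ shows that exactness of the dualised approximation sequence under $\Hom_A(-,Q)$ is equivalent to the vanishing of the stated $\Tor$'s plus bijectivity of $\chi^r_{DM}$. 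The equivalence of the two $\Tor$ conditions, $\Tor_i^B(\Hom_A(DQ,DM),DQ)=\Tor_i^B(\Hom_A(M,Q),DQ)=0$, should follow from the natural isomorphism $\Hom_A(M,Q)\cong D\Hom_{A^{op}}(DQ,DM)$ (adjunction/duality) together with $D$ being exact and $R$-linear, so the two $\Tor$ modules are $R$-duals of each other and hence vanish simultaneously; I would state this as a short lemma or cite the relevant left-right symmetry from \cite{Cr2}.

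The main obstacle I anticipate is the bookkeeping around the duality and the side-changing: keeping straight which $\Hom$ is an $A$-, $A^{op}$-, $B$-, or $B^{op}$-module, making sure $\chi^r_{DM}$ is well-defined with the stated module structures, and verifying that ``stays exact under $\Hom_A(-,Q)$'' on one side corresponds exactly to the $\Tor$-vanishing on the other — the indexing shift (length-$n$ sequence $\leftrightarrow$ $\Tor_i$ for $1\le i\le n-2$, plus two ``boundary'' conditions absorbed into $\chi^r_{DM}$ being iso and the $R$-projectivity hypotheses) is the delicate point. I would handle this by first proving the case $n=2$ by hand (where the statement is just: $\chi^r_{DM}$ iso), then inducting via a syzygy in the $B$-module $\Hom_A(DQ,DM)$, using that $R\proj$ hypotheses guarantee all the intermediate $\Hom$'s and kernels stay in $A\m\cap R\proj$ so that $D$ and the adjunctions behave well at every step.
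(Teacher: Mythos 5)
The paper itself does not prove this statement: its ``proof'' is a citation to \citep[Theorem 3.1.4]{Cr2}, so there is no internal argument to compare against. Your route --- dualise the defining coresolution $0\to M\to Q_1\to\cdots\to Q_n$, apply $\Hom_{A^{op}}(DQ,-)$ to turn it into a length-$(n-1)$ fragment of a projective resolution of the $B$-module $\Hom_A(DQ,DM)$, then tensor back with $DQ$ and dimension-shift --- is exactly the standard M\"uller-type argument by which the cited result is established, and your accounting of the indices (two ``boundary'' conditions absorbed into $\chi^r_{DM}$ being an isomorphism, the remaining $n-2$ into $\Tor$-vanishing) is correct.

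Two points need repair before this is a proof. First, the isomorphism you use to reconcile the two $\Tor$ conditions is wrong as stated: the natural map is $\varphi\colon\Hom_A(M,Q)\cong\Hom_{A^{op}}(DQ,DM)$ with no duality applied (this is the map $\varphi$ of Lemma \ref{lemma2dot1}, an isomorphism by \citep[Proposition 2.2]{CRUZ2022410}), not $\Hom_A(M,Q)\cong D\Hom_{A^{op}}(DQ,DM)$. Your version would make the two $\Tor$ groups $R$-duals of one another, and over a general commutative Noetherian ring the vanishing of $\Hom_R(X,R)$ does not force the vanishing of $X$, so ``vanish simultaneously'' would not follow; with the correct $\varphi$ the two modules are isomorphic as right $B$-modules and the two conditions are literally interchangeable. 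Second, the converse implication (from $\chi^r_{DM}$ an isomorphism plus $\Tor$-vanishing back to $Q\hy\domdim_{(A,R)}M\geq n$) is where your sketch is thinnest and where the hypotheses $\Hom_A(Q,Q)\in R\proj$ and $M\in R\proj$ do real work: one must start from an honest projective resolution of $\Hom_A(DQ,DM)$ over $B$, tensor with $DQ$, dualise, and then verify that the resulting coresolution of $M$ by objects of $\add_A Q$ is not merely exact but $(A,R)$-exact (i.e.\ $R$-split) and remains exact under $\Hom_A(-,Q)$; the $R$-splitness requires checking that all intermediate kernels and cokernels stay in $R\proj$ (note also that $D$ is only exact on such sequences, not on all of $A\m$). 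You anticipate this difficulty but do not resolve it, and it is the one step that cannot be waved through by ``the standard argument.''
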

\begin{proof}
	See \citep[Theorem 3.1.4.]{Cr2}.
\end{proof}
In particular, the assumption on $Q$ is satisfied whenever $DQ\otimes_A Q\in R\proj$. Moreover, in such a case $B=\End_A(Q)^{op}$ has the base change property (for example see \citep[Proposition 2.3, Proposition 2.1]{CRUZ2022410}):
\begin{align*}
	S\otimes_R \End_A(Q)^{op}\simeq \End_{S\otimes_R A}(S\otimes_R Q)^{op}, 
\end{align*}as $S$-algebras, for every commutative Noetherian ring $S$ which is an $R$-algebra.
In the majority of cases (when $\End_A(Q)^{op}$ is again a projective Noetherian $R$-algebra having a base change property) and $M$ has a certain base change property with respect to $Q$, the computations of $Q\hy\domdim_{(A, R)} M$ can be reduced to computations over a finite-dimensional algebra over an algebraically closed field (see \citep[Theorem 3.2.5.]{Cr2}). This is in particular the case when $DQ\otimes_A Q$ and $DQ\otimes_AM$ both are finitely generated projective $R$-modules.

It follows from Theorem \ref{moduleMuellerparttwo} that a module $Q\in A\m\cap R\proj$ satisfying $\Hom_A(Q, Q)\in R\proj$ and $Q\hy\domdim_{(A, R)} A\geq 2$ affords a double centralizer property. Indeed, such a module $Q$ affords a double centralizer property if and only if $D\chi^r_{DA}$ is an isomorphism (see \citep[Lemma 5.1.2.]{Cr2}). In general, the map $D\chi_{DM}^r$ admits the following simplification:

\begin{Lemma}\label{lemma2dot1}
	The map $D\chi_{DM}^r$ fits into a commutative diagram 
	\begin{equation}
		\begin{tikzcd}
			DDM \arrow[r, "D\chi_{DM}^r", swap] & D(\Hom_A(DQ, DM)\otimes_B DQ)  \arrow[r, "\kappa", "\simeq"'] & \Hom_B(\Hom_A(DQ, DM), DDQ) \arrow[d, "\Hom_B(\varphi{,} DDQ)", "\simeq"'] \\
			M\arrow[u, "\omega_M", "\simeq"'] \arrow[r, "\alpha_M"] &
		  \Hom_B(\Hom_A(M, Q), Q)\arrow[r, "\Hom_B(\Hom_A(M{,} Q){,} \omega_Q)", "\simeq"', outer sep=0.75ex]  &\Hom_B(\Hom_A(M, Q), DDQ)  
		\end{tikzcd},
	\end{equation} where $\alpha_M$ is the map $M\rightarrow \Hom_B(\Hom_A(M, Q), Q)$ given by $\alpha_M(m)(f)=f(m)$ for every $m\in M$ and $f\in \Hom_A(M, Q)$. Here, $\omega$ is the natural transformation between the identity functor on $A\m\cap R\proj$ and $D^2$, $\varphi$ is the canonical map $\Hom_A(M, Q)\rightarrow \Hom_A(DQ, DM)$  and $\kappa$ is the isomorphism given by Tensor-Hom adjunction.
\end{Lemma}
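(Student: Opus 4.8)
The plan is to prove commutativity by writing down explicit formulas for every arrow and then chasing a general element $m\in M$ around the two composite paths from $M$ to $\Hom_B(\Hom_A(M,Q),DDQ)$. First I would record the ingredients: the natural transformation $\omega$ is evaluation, $\omega_N(x)(\xi)=\xi(x)$ for $x\in N$ and $\xi\in DN$, and $\omega_M$, $\omega_Q$ are isomorphisms precisely because $M,Q\in A\m\cap R\proj$ (so $\Hom_B(\Hom_A(M,Q),\omega_Q)$ is an isomorphism); the canonical map $\varphi$ is $g\mapsto Dg$, so $\varphi(g)(h)=h\circ g$ for $g\in\Hom_A(M,Q)$, $h\in DQ$, and it is an isomorphism because $D$ restricts to a duality on $A\m\cap R\proj$ (so $\Hom_B(\varphi,DDQ)$ is an isomorphism); $\kappa$ is the Tensor--Hom adjunction isomorphism, $\kappa(\theta)(f)(h)=\theta(f\otimes h)$; and $\chi^r_{DM}$ is $f\otimes h\mapsto f(h)$, whence $D\chi^r_{DM}(\xi)=\xi\circ\chi^r_{DM}$. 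At this point I would also check that $\Hom_A(M,Q)$, $\Hom_A(DQ,DM)$ and $DQ$ carry the right/left $B$-module structures making $\kappa$ and $\Hom_B(\varphi,DDQ)$ well defined and $B$-linear, using the identification $\End_{A^{op}}(DQ)\cong\End_A(Q)^{op}=B$.

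The core of the argument is then the element chase. Starting from $m\in M$ and going up and along the top row: $D\chi^r_{DM}(\omega_M(m))$ is the functional $f\otimes h\mapsto\omega_M(m)(f(h))=f(h)(m)$; applying $\kappa$ gives the $B$-map $f\mapsto(h\mapsto f(h)(m))$; and precomposing with $\varphi$ gives $g\mapsto(h\mapsto\varphi(g)(h)(m))=(h\mapsto h(g(m)))$, using $\varphi(g)(h)=h\circ g$. Going instead along the bottom row and then up: $\alpha_M(m)$ is the map $g\mapsto g(m)$, and postcomposing with $\omega_Q$ gives $g\mapsto\omega_Q(g(m))=(h\mapsto h(g(m)))$. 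The two outcomes coincide, which is exactly the asserted commutativity; together with the observations in the previous paragraph this also records that $\kappa$, $\Hom_B(\varphi,DDQ)$, $\Hom_B(\Hom_A(M,Q),\omega_Q)$ and $\omega_M$ are isomorphisms.

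The only genuine care needed is bookkeeping of the left/right module conventions: one must make sure that every $\Hom_B$ and $\otimes_B$ is taken over $B$ on the correct side, that the symbol $\Hom_A$ applied to the $A^{op}$-modules $DQ$, $DM$ is read as $\Hom_{A^{op}}$, and that all arrows are maps of left $A$-modules via the left $A$-structure on $Q$, so that the diagram genuinely lives in $A\m$ (which is what is needed when this simplification of $D\chi^r_{DM}$ is used together with Theorem~\ref{moduleMuellerparttwo}). I do not expect any real obstacle beyond this notational hygiene; once the formulas for $\omega$, $\varphi$, $\kappa$ and $\chi^r$ are fixed, commutativity is forced by the naturality of $\omega$ and of the Tensor--Hom adjunction.
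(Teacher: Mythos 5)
Your proof is correct and follows essentially the same route as the paper: an explicit element chase showing that both composites send $m$ to the functional $g\mapsto(h\mapsto h(g(m)))$, together with the standing facts that $\omega_M,\omega_Q,\kappa$ are isomorphisms and $\varphi$ is an isomorphism by the duality on $A\m\cap R\proj$. The paper's proof is the same computation written with slightly less commentary on the module-structure bookkeeping, and it cites a reference for $\varphi$ being an isomorphism rather than arguing it inline.
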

\begin{proof}
	Indeed for any $m\in M$, $f\in \Hom_A(M, Q)$, $h\in DQ$ the following holds:
	\begin{align*}
		\Hom_B(\Hom_A(M, Q), \omega_Q)\circ \alpha_M(m)(f)(h)=\omega_Q(\alpha_M(m)(f))(h)= \omega_Q(f(m))(h)=h\circ f(m)
	\end{align*}and
\begin{align*}
	\Hom_B(\varphi, DDQ)\circ \kappa\circ D\chi_{DM}^r\circ \omega_M(m)(f)(h)&=\Hom_B(\varphi, DDQ)\circ \kappa(\omega_M(m)\circ \chi_{DM}^r)(f)(h)\\=\kappa(\omega_M(m)\circ \chi_{DM}^r)(\varphi(f))(h)&=\omega_M(m)\circ \chi_{DM}^r(\varphi(f)\otimes h)=\omega_M(m)(\varphi(f)(h))\\&=\varphi(f)(h)(m)=h\circ f(m).\tag*{\qedhere}
\end{align*}
For $\varphi$ being an isomorphism see for example \citep[Proposition 2.2.]{CRUZ2022410}.
\end{proof}

\subsection{Generators}

It is nowadays a classic result due to Morita that generators are projective modules over its endomorphism algebra.
\begin{Theorem}
	Let $M$ be a right $A$-module. $M$ is a generator if and only if $M$ is projective over $\End_A(M)$ and $M$ affords a double centralizer property.
\end{Theorem}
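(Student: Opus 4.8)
The plan is to treat this as the classical Morita characterisation and to read each of the two homological properties directly off the defining feature of a generator of $\rmod A$ --- namely that $A$ is isomorphic to a direct summand of a finite direct sum $M^n$ of copies of $M$ --- and, conversely, to rebuild that feature from the two properties. Throughout I would write $B:=\End_A(M)$, viewed so that $M$ becomes a $(B,A)$-bimodule, and $\psi\colon A\rightarrow\End_B(M)$ for the canonical map $a\mapsto(m\mapsto ma)$, which does land in the $B$-endomorphisms precisely because the elements of $B$ are $A$-linear.

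For the implication from generator to the two properties I would fix a split monomorphism $\iota\colon A\rightarrow M^n$ with retraction $\pi\colon M^n\rightarrow A$ in $\rmod A$. Applying the additive functor $\Hom_A(-,M)$ exhibits $M\cong\Hom_A(A,M)$ as a direct summand of $\Hom_A(M^n,M)\cong B^n$ as left $B$-modules, so $M$ is (finitely generated) projective over $B$. For the double centraliser property I would check that $\psi$ is bijective: injectivity is just faithfulness of $M$ as a right $A$-module, which is immediate from $A\mid M^n$. Surjectivity carries the real content: since $M$ is a generator its trace ideal equals $A$, so there are finitely many $g_i\in\Hom_A(M,A)$ and $m_i\in M$ with $\sum_i g_i(m_i)=1_A$. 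The key observation is that each assignment $x\mapsto m\,g_i(x)$ is an $A$-endomorphism of $M$, hence an element of $B$, so $B$-linearity of a given $\phi\in\End_B(M)$ forces $\phi(m\,g_i(x))=m\,g_i(\phi(x))$ for all $x$. Putting $a:=\sum_i g_i(\phi(m_i))\in A$ and computing $ma=\sum_i m\,g_i(\phi(m_i))=\sum_i\phi\big(m\,g_i(m_i)\big)=\phi\big(m\sum_i g_i(m_i)\big)=\phi(m)$ then gives $\phi=\psi(a)$.

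For the converse, assume $M$ is projective over $B$ and that $\psi$ is an isomorphism. Projectivity provides a split monomorphism $M\rightarrow B^{(I)}$ of left $B$-modules for some index set $I$. Applying $\Hom_B(-,M)$ yields a split epimorphism of right $A$-modules $M^{I}\cong\Hom_B(B^{(I)},M)\twoheadrightarrow\End_B(M)\cong A$, where the identifications of right $A$-module structures are exactly the place where the double centraliser isomorphism $\psi$ is used (post-composition by $\psi(a)$ corresponds to right multiplication by $a$). Hence $A$ is a direct summand of $M^{I}$; since $A$ is finitely generated one reduces routinely to a finite subproduct $M^{I_0}$, the restricted retraction still splitting the inclusion, so $A\in\add_A M$, i.e.\ $M$ is a generator.

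The only step demanding genuine care is the surjectivity of $\psi$ in the forward direction; everything else is bookkeeping with split sequences and with the left/right conventions for modules over $B=\End_A(M)$. I would also note that the relative, ring-theoretic framework of the paper contributes nothing here: a generator produces split --- hence $R$-split --- sequences, so the classical argument applies verbatim over a projective Noetherian $R$-algebra, and in fact one could simply cite Morita's original treatment.
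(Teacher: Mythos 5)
The paper gives no argument of its own for this statement---it simply cites the classical Morita theorem ([XVII, Theorem 7.1] of Lang)---so your write-up is essentially the proof behind that citation. The forward direction is correct in every detail: the split presentation $A\mid M^{n}$ dualises to exhibit $M$ as a summand of $B^{n}$, and the trace-ideal computation $\sum_i g_i(m_i)=1_A$ together with the observation that $x\mapsto m\,g_i(x)$ lies in $B$ is exactly the standard proof that $\psi$ is surjective.

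The one step that does not work as written is the finiteness reduction in the converse. From the split monomorphism $M\rightarrow B^{(I)}$ you obtain a split epimorphism $M^{I}\twoheadrightarrow A$ in which $M^{I}$ is a \emph{product}; the section $A\rightarrow M^{I}$ sends $1_A$ to a tuple that need not have finite support, so restricting the retraction to a finite subproduct does not by itself produce a splitting, and finite generation of $A$ is not the hypothesis that saves you. Note that the classical statement you are reproducing assumes $M$ \emph{finitely generated} projective over $B$ for precisely this reason. The repair is cheap, but it uses the ambient framework you dismiss at the end: since $M$ is finitely generated over $A$ and $A$ is finitely generated over the Noetherian ring $R$, the module $M$ is finitely generated over $R$ and hence over $B\supseteq R\cdot \mathrm{id}_M$, so $M$ is finitely generated projective over $B$ and $I$ may be taken finite from the outset. (Equivalently, a finite dual basis $\{x_j,f_j\}$ gives $\mathrm{id}_M=\sum_j\bigl(x\mapsto f_j(x)\cdot x_j\bigr)$ in $\End_B(M)\cong A$, with each $m\mapsto\bigl(x\mapsto f_j(x)\cdot m\bigr)$ right $A$-linear $M\rightarrow A$, which puts $1_A$ in the trace ideal of $M$ directly.) With that adjustment the proof is complete and coincides with the argument the paper is citing.
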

\begin{proof}
	See for example \citep[XVII, Theorem 7.1]{zbMATH01703931}.
\end{proof}

Let $M\in \rmod A\cap R\proj$ be a generator of $\rmod A$. It follows by the Morita theorem and \citep[Lemma 5.1.2, Theorem 3.1.4]{Cr2} that $M\hy\domdim_A A\geq 2$ whenever $DM\otimes_{\End_A(M)}M\in R\proj$. Fixing $B=\End_A(M)$,   Morita's theorem infers that $M$ is projective over $B$ and thus \begin{align}
	\Tor_{i>0}^B(DQ, \Hom_A(A, Q))\simeq \Tor_{i>0}^B(DQ, Q)=0.
\end{align} Thus $M\hy\domdim A =+\infty$ by \citep[Theorem 3.1.4]{Cr2}.

Another approach to see this (over finite-dimensional algebras) is the following. Assume that $A$ is a finite-dimensional algebra over a field and let $M$ be a right $A$-module which is a generator of $A$. So, minimal $\add M$-approximations always exist (see \cite{zbMATH03749214}). Further, for every $X\in A\m$ the minimal right $\add M$-approximation $M\rightarrow X$ is surjective. Indeed, the induced sequence $\Hom_A(A, M)\rightarrow \Hom_A(A, X)$ is surjective since $A\in \add M$.
Therefore, $M\hy\codomdim_A DA =+\infty$. By \citep[Corollary 3.1.5.]{Cr2}, $M\hy\domdim A=M\hy\codomdim_A DA=+\infty$.

\begin{Cor}
	Let $A$ be a projective Noetherian $R$-algebra and let $M\in A^{op}\m\cap R\proj$. If $M$ is a generator of $A$ as right module and $M\otimes_A DM\in R\proj$, then $M\hy\domdim (A, R)\geq 2$.
\end{Cor}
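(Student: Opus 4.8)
The plan is to deduce the bound from the Müller‑type characterisation of relative dominant dimension (Theorem~\ref{moduleMuellerparttwo}) together with Morita's theorem, in the spirit of the discussion preceding the corollary but retaining only what is needed for the value $2$. Throughout I regard $M$ as a left $A^{op}$-module, so that $\rmod A=A^{op}\m$ and $M\hy\domdim(A,R)$ means $M\hy\domdim_{(A^{op},R)}A^{op}$; put $B:=\End_{A^{op}}(M)^{op}=\End_A(M)^{op}$.

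First I would verify that Theorem~\ref{moduleMuellerparttwo} applies with base algebra $A^{op}$ and with $M$ in the role of $Q$. Reading the tensor product over $A^{op}$ (so that $M\otimes_A DM\cong DM\otimes_{A^{op}}M$), the hypothesis $M\otimes_A DM\in R\proj$ is the condition $DM\otimes_{A^{op}}M\in R\proj$, which by the remark following Theorem~\ref{moduleMuellerparttwo} forces $\Hom_{A^{op}}(M,M)\in R\proj$, i.e. the standing hypothesis of that theorem. As $A\in R\proj$ we have $A^{op}\in A^{op}\m\cap R\proj$, and since we only want $M\hy\domdim_{(A^{op},R)}A^{op}\geq2$ the relevant case is $n=2$, in which the $\Tor$-vanishing conditions of Theorem~\ref{moduleMuellerparttwo} are vacuous. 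Thus it remains only to show that
\[
\chi^r_{D(A^{op})}\colon\Hom_{A^{op}}\!\big(DM,\,D(A^{op})\big)\otimes_B DM\longrightarrow D(A^{op}),\qquad f\otimes h\mapsto f(h),
\]
is an isomorphism.

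Since $M$ is a generator of $\rmod A$, Morita's theorem tells us that $M$ is projective over $\End_A(M)$ and affords a double centraliser property over $A^{op}$. By Lemma~\ref{lemma2dot1} applied to the regular module $A^{op}$ — all of whose maps other than $\alpha_{A^{op}}$ are isomorphisms — or equivalently by \citep[Lemma~5.1.2]{Cr2}, the double centraliser property is equivalent to $D\chi^r_{D(A^{op})}$ being an isomorphism. It then remains to deduce that $\chi^r_{D(A^{op})}$ itself is an isomorphism, and for this it suffices that its source and target lie in $R\proj$, because $D=\Hom_R(-,R)$ restricts to a duality, hence reflects isomorphisms, on $R\proj$. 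The target $D(A^{op})=\Hom_R(A,R)$ lies in $R\proj$; for the source, Morita's theorem makes $M$ a finitely generated projective right $B$-module — indeed $A^{op}\in\add_{A^{op}}M$ exhibits $M\cong\Hom_{A^{op}}(A^{op},M)$ as a direct summand of $B^{k}$ — while the canonical isomorphism $\varphi\colon M=\Hom_{A^{op}}(A^{op},M)\xrightarrow{\ \sim\ }\Hom_{A^{op}}(DM,D(A^{op}))$ of Lemma~\ref{lemma2dot1} identifies the source with $M\otimes_B DM$, a direct summand of $DM^{\,k}$, which lies in $R\proj$ as $DM\in R\proj$. Hence $\chi^r_{D(A^{op})}$ is an isomorphism and Theorem~\ref{moduleMuellerparttwo} gives $M\hy\domdim(A,R)\geq2$. (Alternatively one may avoid $D$ entirely, identifying $\chi^r_{D(A^{op})}$, via $\varphi$ and the Hom–tensor adjunction $DM=\Hom_R(M,R)\cong\Hom_{A^{op}}(M,D(A^{op}))$, with the counit $M\otimes_B\Hom_{A^{op}}(M,D(A^{op}))\to D(A^{op})$ of the adjunction $\big(M\otimes_B-,\ \Hom_{A^{op}}(M,-)\big)$, which is an isomorphism exactly because $M$ is a generator.)

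The argument is not deep; what needs care is organisational — keeping the left/right actions and the passage $A\rightsquigarrow A^{op}$ straight, and checking that the source of $\chi^r_{D(A^{op})}$ is finitely generated projective over $R$ so that applying $D$ loses no information (equivalently, running the routine diagram chase that identifies $\chi^r_{D(A^{op})}$ with the Morita counit).
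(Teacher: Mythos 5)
Your proof is correct, and it takes a genuinely different route from the paper. The paper's proof of this corollary is short because it delegates all the work to the base-change machinery: it observes that $R/\mi\otimes_R M$ is a generator over $R/\mi\otimes_R A$ for each maximal ideal $\mi$, invokes the preceding finite-dimensional discussion to get $R/\mi\otimes_R M\hy\domdim\, R/\mi\otimes_R A\geq 2$ fibrewise, and then lifts this back to the Noetherian setting via \citep[Theorem~3.2.5]{Cr2} (which is where the hypothesis $M\otimes_A DM\in R\proj$ is actually consumed). Your proof instead works intrinsically over $R$: you feed $M\otimes_A DM\in R\proj$ into the remark after Theorem~\ref{moduleMuellerparttwo} to legitimise applying that theorem, note that for $n=2$ the $\Tor$ conditions are vacuous so only the bijectivity of $\chi^r_{D(A^{op})}$ matters, obtain the double centraliser property from Morita's theorem, translate it into $D\chi^r_{D(A^{op})}$ being an isomorphism via Lemma~\ref{lemma2dot1} (equivalently \citep[Lemma~5.1.2]{Cr2}), and then remove the $D$ by checking source and target of $\chi^r_{D(A^{op})}$ lie in $R\proj$. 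The last step is the real added value: the paper's first paragraph of discussion before the corollary runs essentially this argument but under the stronger-looking hypothesis $DM\otimes_{\End_A(M)}M\in R\proj$; you observe that this condition is automatic because Morita's theorem makes $M$ a finitely generated projective right $B$-module (the dual basis argument via $A\in\add_A M$ gives finite generation), so $M\otimes_B DM$ is a summand of $DM^k\in R\proj$. Thus the stated hypothesis $M\otimes_A DM\in R\proj$ already suffices without localisation. This is a cleaner and more self-contained argument than the paper's; it buys independence from \citep[Theorem~3.2.5]{Cr2}, at the cost of having to keep the $R\proj$ bookkeeping straight. One small caveat: the parenthetical alternative at the end, asserting that the counit of $(M\otimes_B-,\ \Hom_{A^{op}}(M,-))$ at $D(A^{op})$ is an isomorphism ``exactly because $M$ is a generator,'' is not quite right as stated — a generator alone does not make this counit invertible; one still needs the double centraliser input and the projectivity of $M$ over $B$, which is what your main argument supplies. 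Since the main line of proof does not rely on that parenthetical, the overall argument is sound.
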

\begin{proof}
	By assumption,  $A\in \add M$ and so $R/\mi\otimes_R A\in \add R/\mi\otimes_R M$ for every maximal ideal $\mi$ of $R$. This means that $R/\mi\otimes_R M$ is a generator over $R/\mi\otimes_R A$. By the previous discussion, $R/\mi\otimes_R M\hy\domdim R/\mi\otimes_R A\geq 2$ for every maximal ideal $\mi$ of $R$. By \citep[Theorem 3.2.5.]{Cr2}, $M\hy\domdim (A, R)\geq 2$.
\end{proof}

\subsection{Cogenerators}
A module $M\in A\m\cap R\proj$ is called \textbf{$(A, R)$-cogenerator} if $DA\in \add M$. It is called \textbf{$(A, R)$-injective} if $M\in \add DA$. Observe that since $A$ is projective Noetherian $M$ is $(A, R)$-cogenerator if and only if $DM$ is a generator of $A^{op}$. So, all the results for generators can be dualised for $(A, R)$-cogenerators.
Observe, in particular, that Morita's theorem for $(A, R)$-cogenerators reads as follows: 
\begin{Cor}
	Let $M\in \rmod A\cap R\proj$. $M$ is an $(A, R)$-cogenerator if and only if $M$ is $(\End_A(M), R)$-injective and $M$ affords a double centralizer property.
\end{Cor}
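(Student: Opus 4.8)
The statement to prove is the ``cogenerator'' version of Morita's theorem, namely: for $M \in \rmod A \cap R\proj$, $M$ is an $(A,R)$-cogenerator if and only if $M$ is $(\End_A(M), R)$-injective and $M$ affords a double centraliser property. The natural strategy is to deduce it from the generator version of Morita's theorem (the Theorem stated just above, applied over $A^{op}$) via the standard duality functor $D = \Hom_R(-, R)$, exploiting exactly the remark made in the text: since $A$ is projective Noetherian, $M$ is an $(A,R)$-cogenerator if and only if $DM$ is a generator of $A^{op}$. So the first step is to translate each of the three conditions in the statement across the duality $D$, check they match the corresponding conditions for $DM$ over $A^{op}$, and then invoke Morita's theorem for generators.

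\textbf{Key steps.} First I would record that $D$ induces a duality between $\rmod A \cap R\proj$ and $A^{op}\m \cap R\proj$ (it is its own quasi-inverse up to the natural isomorphism $\omega$), so in particular $D$ is fully faithful and exact on these subcategories, and $DD M \cong M$ naturally. Second, I would identify the endomorphism algebras: $D$ gives an isomorphism of $R$-algebras $\End_A(M)^{op} \cong \End_{A^{op}}(DM)$, equivalently $\End_A(M) \cong \End_{A^{op}}(DM)^{op}$; write $B = \End_A(M)$, so $DM$ is a $B$-module (left, say) matching how $M$ is a $B$-module. Third, I would translate the three conditions:
\begin{itemize}
\item[(i)] ``$M$ is $(A,R)$-cogenerator'' $\iff$ ``$DM$ is a generator of $A^{op}$'' --- this is exactly the cited remark, coming from $DA \in \add_A M \iff A^{op} = DDA \in \add_{A^{op}} DM$.
\item[(ii)] ``$M$ is $(\End_A(M), R)$-injective'', i.e.\ $M \in \add_B DB$, translates under $D$ (now over the ring $B$, using again that $D$ takes $\add$ to $\add$ and $DDB \cong B$) to ``$DM$ is projective over $B$'', i.e.\ over $\End_{A^{op}}(DM)$ up to the algebra isomorphism above. (One must be mildly careful that $B$ itself is a projective Noetherian $R$-algebra so that $D$ behaves well over $B$; this follows since $\End_A(M)$ is $R$-finitely generated as $M \in R\proj$, though one may need $\Hom_A(M,M) \in R\proj$, which is part of the ambient setup whenever it is invoked.)
\item[(iii)] ``$M$ affords a double centraliser property'' for the $A$--$B$ bimodule structure is, via $D$ applied to the canonical map $A \to \End_B(M)$, equivalent to ``$DM$ affords a double centraliser property'' as an $A^{op}$--$B^{op}$ bimodule; concretely $D$ carries the canonical map $A \to \End_{\End_A(M)}(M)$ to the canonical map $A^{op} \to \End_{\End_{A^{op}}(DM)}(DM)$ and, being a bijection on Hom-spaces, preserves the property of that map being an isomorphism.
\end{itemize}
Finally, applying Morita's theorem for generators to the right $A^{op}$-module $DM$ yields: $DM$ is a generator of $A^{op}$ $\iff$ $DM$ is projective over $\End_{A^{op}}(DM)$ and $DM$ affords a double centraliser property. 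Pulling each clause back through steps (i)--(iii) gives exactly the asserted equivalence for $M$.

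\textbf{Main obstacle.} The genuinely routine part is the exactness and $\add$-preservation of $D$; the subtlety I expect to spend the most care on is the bookkeeping of opposite algebras and of which side the module sits on --- tracking that $B = \End_A(M)$ (not $B^{op}$) is the ring over which ``injective = $\add DB$'' is meant, and that the duality $D$ over $B$ requires $B \in R\proj$, which must be noted or assumed. One must also make sure the double-centraliser map that $D$ sends to the double-centraliser map is literally the canonical one (up to the natural isomorphism $\omega$), but this is a diagram chase essentially identical to the one in Lemma~\ref{lemma2dot1}, so no new idea is needed. Once the dictionary is set up cleanly, the corollary is immediate from the generator case.
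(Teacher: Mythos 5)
Your proposal matches the paper's own reasoning exactly: the Corollary is stated immediately after the remark that $M$ is an $(A,R)$-cogenerator iff $DM$ is a generator of $A^{op}$, and the paper's ``proof'' is precisely the observation that all the generator results dualise through $D$. Your careful bookkeeping of opposite algebras, module sides, and the identification $\End_{A^{op}}(DM)\cong\End_A(M)^{op}$ (together with flagging the tacit need for $\End_A(M)$ to behave well under $D$) simply makes explicit what the paper leaves implicit, so this is the intended argument.
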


\subsubsection{Relative injective dimension} $(A, R)$-injective modules made their first appearence in \cite{zbMATH03117632}. The $(A, R)$-injective modules which are projective over the ground ring are exactly the injective objects in $A\m\cap R\proj$. Indeed, they can be characterised in the following way:

\begin{Prop}\label{prop2dot5}
		Let $M\in A\m\cap R\proj$. The following assertions are equivalent.
	\begin{enumerate}[(a)]
		\item $M$ is $(A, R)$-injective;
		\item Every $(A, R)$-exact sequence
		$0\rightarrow M\rightarrow V\rightarrow W\rightarrow 0$ is split over $A$;
		\item The canonical homomorphism of $A$-modules $ M\hookrightarrow \Hom_R(A, M)$ %
		splits over $A$;
		\item The functor $\Hom_A(-, M)$ is exact on $(A, R)$-exact sequences;
	\item $\Ext_A^1(X, M)=0$ for every $X\in A\m\cap R\proj$.\end{enumerate}
\end{Prop}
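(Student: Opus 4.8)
The plan is to prove Proposition~\ref{prop2dot5} by establishing a cycle of implications. I would prove $(a)\Rightarrow(b)\Rightarrow(c)\Rightarrow(a)$ to close the first loop, then $(a)\Rightarrow(d)\Rightarrow(e)\Rightarrow(a)$ (or interweave them through $(b)$), since $(d)$ and $(e)$ are most naturally phrased in terms of the $\Ext$-vanishing that $(A,R)$-exactness controls. The conceptual engine throughout is that $\Hom_R(A,M)$ is an injective object relative to $(A,R)$-exact sequences: for any $(A,R)$-exact $0\to X\to Y\to Z\to 0$, applying $\Hom_A(-,\Hom_R(A,M))$ is, by the adjunction $\Hom_A(-,\Hom_R(A,M))\simeq \Hom_R(-,M)$, the same as applying $\Hom_R(-,M)$ to an $R$-split sequence, hence exact. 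So $\Hom_R(A,M)$ always satisfies the analogues of $(d)$ and $(e)$, and the content of the proposition is that $M$ shares these properties precisely when it is a summand of $\Hom_R(A,M)$ — which, since $M\in R\proj$ is $R$-finitely generated and the unit $M\hookrightarrow \Hom_R(A,M)$ is a canonical $A$-split-over-$R$ monomorphism, is the same as saying $M\in\add DA$ after identifying $\Hom_R(A,M)\simeq DA\otimes_R M$ appropriately (or more carefully, recognising $\Hom_R(A,M)$ as $(A,R)$-injective and summands of $(A,R)$-injectives as $(A,R)$-injective).

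For the individual steps: $(a)\Rightarrow(b)$ follows because if $M\in\add DA$ then $\Hom_A(-,M)$ takes $(A,R)$-exact sequences to exact sequences (reduce to $M=DA=\Hom_R(A,R)$ and use the adjunction as above, getting exactness of $\Hom_R(-,R)$ on an $R$-split sequence), so in particular $\id_M$ lifts along $M\to V$; $(b)\Rightarrow(c)$ is immediate by applying $(b)$ to the canonical $(A,R)$-exact sequence $0\to M\to \Hom_R(A,M)\to \coker\to 0$ (this is $R$-split because the unit $M\to\Hom_R(A,M)$ is $R$-split, sending $m\mapsto (a\mapsto am)$ with $R$-retraction evaluation at $1$). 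The step $(c)\Rightarrow(a)$ is where I would be most careful: a split of $M\hookrightarrow\Hom_R(A,M)$ exhibits $M$ as a direct summand of $\Hom_R(A,M)$, and I need $\Hom_R(A,M)\in\add DA$; this holds since $\Hom_R(A,M)\simeq D(A\otimes_R ?)$... more honestly, since $M\in R\proj$ is a summand of a free $R$-module $R^t$, we get $\Hom_R(A,M)$ a summand of $\Hom_R(A,R^t)=(DA)^t$, so $\Hom_R(A,M)\in\add DA$, hence so is its summand $M$. For the second loop, $(a)\Rightarrow(d)$ is the same adjunction computation as in $(a)\Rightarrow(b)$; $(d)\Rightarrow(e)$ comes from dimension shifting — given $X\in A\m\cap R\proj$, take an $(A,R)$-exact sequence $0\to K\to P\to X\to 0$ with $P$ projective (which exists and is automatically $R$-split since $P$ is $R$-projective, or one builds it from a projective cover; note $K\in R\proj$ too as a kernel of a map of $R$-projectives... actually $K$ is $R$-projective since the sequence is $R$-split) and use the long exact sequence together with $(d)$ killing $\Hom_A(P,M)\to\Hom_A(K,M)$ surjectively, forcing $\Ext^1_A(X,M)=0$; $(e)\Rightarrow(a)$ applies $(e)$ with $X=\coker(M\hookrightarrow\Hom_R(A,M))$ to split the canonical sequence, landing back in $(c)$ hence $(a)$.

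The main obstacle I anticipate is not any single implication but keeping the $R$-projectivity bookkeeping clean throughout: one must repeatedly check that kernels, cokernels, and $\Hom_R(A,-)$ of modules in $A\m\cap R\proj$ stay in $A\m\cap R\proj$, and that the various "canonical" sequences are genuinely $(A,R)$-exact (i.e.\ $R$-split), which is what makes the adjunction arguments legitimate — for instance, that $0\to M\to\Hom_R(A,M)\to\coker\to 0$ is $R$-split requires the $R$-retraction given by evaluation at $1\in A$, and that in $(d)\Rightarrow(e)$ the kernel $K$ is $R$-projective uses that the sequence $0\to K\to P\to X\to 0$ is $R$-split because $X\in R\proj$. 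I would state these stability facts once at the outset (or cite them from the relative homological algebra setup in \cite{zbMATH03117632} or \citep[Section 2]{CRUZ2022410}) and then the implications become short. A secondary subtlety is the identification $\Hom_R(A,M)$ with objects of $\add DA$: over a field $R$ one would just say $\Hom_R(A,M)\cong DA\otimes_R M$ up to the obvious module structures, but over a general commutative Noetherian $R$ the cleanest route is the summand-of-$(DA)^t$ argument above, avoiding any tensor identity.
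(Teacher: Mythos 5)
Your proof is correct: the cycle $(a)\Rightarrow(b)\Rightarrow(c)\Rightarrow(a)$ and $(a)\Rightarrow(d)\Rightarrow(e)\Rightarrow(a)$ closes properly, the adjunction $\Hom_A(-,\Hom_R(A,M))\simeq\Hom_R(-,M)$ is the right engine, and the $R$-projectivity bookkeeping you flag (the $R$-split retraction by evaluation at $1$, the summand-of-$(DA)^t$ identification, the $R$-splitness of $0\to K\to P\to X\to 0$) is exactly what needs checking and you check it. The paper gives no proof of its own here, only a citation to \citep[Propositions 2.6 and 2.10]{CRUZ2022410}, and your argument is the standard one underlying those cited results.
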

\begin{proof}
	See for example \cite[Proposition 2.6., Proposition 2.10]{CRUZ2022410}.
\end{proof}

Given $M\in A\m\cap R\proj$, we say that $M$ has $(A, R)$-injective dimension $n$, denoted as $\injdim_{(A, R)} M$, if and only if $\Ext_A^{n+1}(N, M)=0$ for all $N\in A\m \cap R\proj$ and $\Ext_A^{n+2}(X, M)\neq 0$ for some $X\in A\m\cap R\proj$. 

By Proposition \ref{prop2dot5}, $(A, R)$-injective modules which are projective as $R$-modules are acyclic objects with respect to $\Hom_A(X, -)$ for every $X\in A\m\cap R\proj$. So, coresolutions by $(A, R)$-injective modules can be used to compute $(A, R)$-injective dimensions. Given $M\in A\m\cap R\proj$, such a coresolution can be obtained by applying $D$ to a projective resolution of $DM$.

\begin{Lemma}\label{prop2dot6}
		Let $M\in A\m\cap R\proj$. The following assertions are equivalent.
		\begin{enumerate}
			\item $\injdim_{(A, R)} M\leq n$;
			\item If $0\rightarrow M\rightarrow X_0\rightarrow X_1\rightarrow \cdots\rightarrow X_{n-1}\rightarrow M'\rightarrow 0$ is an $(A, R)$-exact sequence with all $X_i\in A\m\cap R\proj$ being $(A, R)$-injective, then $M'$ is $(A, R)$-injective. 
		\end{enumerate}
\end{Lemma}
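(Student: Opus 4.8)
The plan is to prove the equivalence of (1) and (2) in Lemma~\ref{prop2dot6} by reducing everything to the characterisation of $(A,R)$-injective dimension via $\Ext$-vanishing, which is how $\injdim_{(A,R)}$ was defined, together with Proposition~\ref{prop2dot5}(e). First I would record the standing observation, already noted before the statement, that $(A,R)$-injective modules which are $R$-projective are $\Hom_A(X,-)$-acyclic for every $X\in A\m\cap R\proj$; this is immediate from Proposition~\ref{prop2dot5}(d)/(e) since any $(A,R)$-exact sequence ending at such an $X$ stays exact after $\Hom_A(X,-)$ and $\Ext^{i}_A(X,-)$ vanishes on these modules for $i\geq 1$. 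Hence a finite $(A,R)$-injective coresolution of $M$ by modules in $A\m\cap R\proj$ computes $\Ext^*_A(X,M)$ by dimension shifting.

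For $(1)\Rightarrow(2)$: given an $(A,R)$-exact sequence $0\rightarrow M\rightarrow X_0\rightarrow\cdots\rightarrow X_{n-1}\rightarrow M'\rightarrow 0$ with the $X_i$ being $(A,R)$-injective and in $A\m\cap R\proj$, I first note $M'\in A\m\cap R\proj$ (it is finitely generated as $A$-module, being a cokernel of a map between such, and $R$-projective because each short piece of the sequence is $(A,R)$-exact hence $R$-split). Breaking the sequence into short $(A,R)$-exact pieces and applying $\Hom_A(X,-)$ for an arbitrary $X\in A\m\cap R\proj$, the acyclicity of the $X_i$ gives a dimension-shift isomorphism $\Ext^1_A(X,M')\cong\Ext^{n+1}_A(X,M)$, which vanishes by hypothesis $(1)$. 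Since this holds for all $X\in A\m\cap R\proj$, Proposition~\ref{prop2dot5}(e) forces $M'$ to be $(A,R)$-injective.

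For $(2)\Rightarrow(1)$: I would construct one explicit finite $(A,R)$-injective coresolution of $M$ of length $n$ using modules in $A\m\cap R\proj$, namely by applying $D$ to the first $n$ terms of a projective resolution of $DM\in A^{op}\m\cap R\proj$ (possible since $A$ is a projective Noetherian $R$-algebra, so $DM$ is finitely generated over $A^{op}$ and projective resolutions by finitely generated projectives exist; $D$ sends $A^{op}$-projectives to $(A,R)$-injectives in $A\m\cap R\proj$, and $D$ is exact on $R$-split sequences). Let $M'$ be the cokernel appearing at step $n$ in this particular coresolution; by hypothesis $(2)$, $M'$ is $(A,R)$-injective, so the truncated sequence $0\rightarrow M\rightarrow X_0\rightarrow\cdots\rightarrow X_{n-1}\rightarrow M'\rightarrow 0$ is itself a finite $(A,R)$-injective coresolution of $M$ by $R$-projective modules. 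Dimension shifting along it then yields $\Ext^{n+1}_A(X,M)\cong\Ext^1_A(X,M')=0$ for all $X\in A\m\cap R\proj$ by Proposition~\ref{prop2dot5}(e), i.e. $\injdim_{(A,R)}M\leq n$.

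The only subtlety — and the step I would be most careful with — is the hidden uniqueness implicit in going from $(2)$ to $(1)$: condition $(2)$ quantifies over \emph{all} such length-$n$ $(A,R)$-exact sequences, so I only need to exhibit \emph{one} to get $(1)$, which is easy; but one should check that the conclusion of $(2)$ is genuinely independent of the chosen coresolution, or simply sidestep this by using the $D$-of-projective-resolution model throughout, as above. Everything else is the standard interplay between relative injective coresolutions and $\Ext$, with the relative flatness/projectivity bookkeeping over $R$ (finite generation of cokernels over $A$, $R$-splitness giving $R$-projectivity of syzygies) being the routine part that I would not grind through in detail.
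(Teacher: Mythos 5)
Your proposal is correct and follows essentially the same route as the paper: both directions are handled by dimension shifting along an $(A,R)$-injective coresolution of $R$-projective modules (which are $\Hom_A(X,-)$-acyclic by Proposition~\ref{prop2dot5}), with the coresolution for $(2)\Rightarrow(1)$ obtained by applying $D$ to a projective resolution of $DM$, exactly as the paper indicates just before the lemma. The "subtlety" you flag is not an issue, for the reason you yourself give: $(2)$ quantifies over all such sequences, so exhibiting one suffices.
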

\begin{proof}
	Assume that (b) holds. Then $M$ admits an $(A, R)$-injective coresolution of length at most $n+1$ (counting $X_0$). Since $(A, R)$-injective coresolutions are acyclic with respect to $\Hom_A(X, -)$ for all $X\in A\m\cap R\proj$ it follows that $\Ext_A^{n+1}(X, M)=0$ for all $X\in A\m\cap R\proj$. Conversely, consider an $(A, R)$-exact sequence $0\rightarrow M\rightarrow X_0\rightarrow X_1\rightarrow \cdots\rightarrow X_{n-1}\rightarrow M'\rightarrow 0$ with all $X_i\in A\m\cap R\proj$ being $(A, R)$-injective. As $M\in R\proj$ we obtain that $M'\in R\proj$. Let $X\in A\m\cap R\proj$. Using the fact that all $X_i$ are acyclic with respect to $\Hom_A(X, -)$ we get by applying $\Hom_A(X, -)$ to previous exact sequence that $0=\Ext_A^{n+1}(X, M)\simeq \Ext_A^1(X, M')$. By Proposition \ref{prop2dot5}, $M'$ is $(A, R)$-injective.
\end{proof}

\begin{Remark}\label{rmk2dot7}
	An elementary consequence of Lemma \ref{prop2dot6} is the equality $$\pdim_A M=\injdim_{(A^{op}, R)} DM$$ for every $M\in A\m\cap R\proj$.
\end{Remark}
This fact can also be deduced directly from the following result.

\begin{Lemma}\label{lemma2dot8}
	Let $M, N\in A\m\cap R\proj$. Then $\Ext_A^i(M, N)\simeq \Ext_{A^{op}}^i(DN, DM)$  and $\Tor_i^{A}(DM, N)\simeq \Tor_i^{A^{op}}(N, DM)$ for all $i\geq 0$.
\end{Lemma}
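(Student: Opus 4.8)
The plan is to reduce everything to the single functorial identity $D\Hom_A(-,N)\cong \Hom_{A^{op}}(DN,D(-))$ on $A\m\cap R\proj$, and then derive both halves of the statement from it. First I would recall that $D=\Hom_R(-,R)$ is exact on $(A,R)$-exact sequences (since these split over $R$), and that it carries a projective resolution $P_\bullet\to M$ in $A\m\cap R\proj$ to an $(A^{op},R)$-injective coresolution $DM\to DP_\bullet$; conversely, $D$ applied to such a coresolution of $DM$ recovers $P_\bullet$ because $\omega$ (the biduality natural transformation from the preliminaries) is an isomorphism on $R$-projective modules. The key adjunction-type isomorphism $D\Hom_A(X,Y)\cong\Hom_{A^{op}}(DY,DX)$, natural in $X$ and $Y$ in $A\m\cap R\proj$, follows from the standard $R$-algebra computation (or can be cited, e.g., as the $\varphi$-type map of \citep[Proposition 2.2]{CRUZ2022410} being an isomorphism): send $g\colon X\to Y$ to $Dg\colon DY\to DX$, and check compatibility with the $A$-module structures using that $R$ is central.

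For the $\Ext$ statement, I would take a projective resolution $P_\bullet\to M$ in $A\m\cap R\proj$. Applying $\Hom_A(-,N)$ and then $D$ gives, by the natural isomorphism above, the complex $\Hom_{A^{op}}(DN,DP_\bullet)$; but since each $DP_i$ is $(A^{op},R)$-injective and $DP_\bullet$ is an $(A^{op},R)$-injective coresolution of $DM$, these injectives are acyclic for $\Hom_{A^{op}}(DN,-)$ by Proposition~\ref{prop2dot5}, so the cohomology of $\Hom_{A^{op}}(DN,DP_\bullet)$ computes $\Ext_{A^{op}}^i(DN,DM)$. On the other hand $D$ is exact, so the cohomology of $D\Hom_A(P_\bullet,N)$ is $D$ of the cohomology of $\Hom_A(P_\bullet,N)$, namely $D\Ext_A^i(M,N)$. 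This yields $D\Ext_A^i(M,N)\cong \Ext_{A^{op}}^i(DN,DM)$; applying $D$ once more and using that $\Ext_A^i(M,N)\in R\proj$ is not automatic, I would instead run the same argument with an $(A,R)$-injective coresolution of $N$ — or simply observe that by symmetry (replacing $A$ by $A^{op}$, $M$ by $DN$, $N$ by $DM$, and using $\omega$) one gets the reverse containment, giving the stated isomorphism $\Ext_A^i(M,N)\simeq\Ext_{A^{op}}^i(DN,DM)$ directly, without needing $D^2\cong\id$ on these Ext groups.

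For the $\Tor$ statement, the cleanest route is the analogous functorial identity $D(X\otimes_A Y)\cong\Hom_{A^{op}}(Y,DX)$ for $X\in\rmod A$, $Y\in A\m$ (tensor--hom adjunction over $R$ composed with the $A$-balance), or more symmetrically the isomorphism $DM\otimes_A N\cong N\otimes_{A^{op}}DM$ coming from the standard swap $X\otimes_A Y\cong Y\otimes_{A^{op}}X$ once one notes $DM$ is naturally a right $A$-module \emph{and} a left $A^{op}$-module compatibly. I would take a projective resolution $P_\bullet\to N$ over $A$ in $A\m\cap R\proj$; then $DM\otimes_A P_\bullet\cong P_\bullet\otimes_{A^{op}}DM$ as complexes, and since $DM$ is $R$-flat ($DM\in R\proj$) and each $P_i$ is $A$-projective hence $R$-projective, the homology of the left complex is $\Tor_i^A(DM,N)$ while the homology of the right complex is $\Tor_i^{A^{op}}(N,DM)$ — here I use that $P_\bullet$ is also a complex of $R$-projectives, so no base-change subtlety arises.

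The main obstacle I anticipate is purely bookkeeping rather than conceptual: keeping the left/right module structures straight across $D$ and across the tensor swap, and making sure the chosen resolutions simultaneously serve as resolutions in $A\m$ \emph{and} behave well under $D$ (which is exactly why one insists the modules lie in $A\m\cap R\proj$ so that $D$ is exact and $\omega$ is invertible). Once the two natural isomorphisms $D\Hom_A(X,Y)\cong\Hom_{A^{op}}(DY,DX)$ and $D(X\otimes_AY)\cong\Hom_{A^{op}}(Y,DX)$ are in hand — both of which are either elementary or already available in \citep{CRUZ2022410} — the derived-functor comparison is formal.
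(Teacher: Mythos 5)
Your Tor argument is exactly the paper's: apply the swap $X\otimes_A Y\cong Y\otimes_{A^{op}}X$ to a deleted projective resolution $N^\bullet$ of $N$ and compare homology; that half is fine (for the Ext half the paper simply cites \citep[Lemma 2.15]{CRUZ2022410} rather than reproving it).

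The Ext half of your proposal, however, contains a genuine error. The ``key adjunction-type isomorphism'' you state, $D\Hom_A(X,Y)\cong\Hom_{A^{op}}(DY,DX)$, is not the isomorphism realised by the map you describe: $g\mapsto Dg$ is a map $\Hom_A(X,Y)\to\Hom_{A^{op}}(DY,DX)$, with no $D$ on the source, and it is this map $\varphi$ (an isomorphism for $X,Y\in A\m\cap R\proj$ by \citep[Proposition 2.2]{CRUZ2022410}) that you should be using. With the extra $D$ in place, your derived-functor comparison yields $D\Ext_A^i(M,N)\cong\Ext_{A^{op}}^i(DN,DM)$, which is not the statement of the lemma, and this is precisely the spurious difficulty you then try to escape. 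Neither escape route works as written: running the argument on an $(A,R)$-injective coresolution of $N$ with the same misstated identity reproduces the same unwanted $D$, and the ``symmetry'' argument only gives $\Ext_A^i(M,N)\cong DD\Ext_A^i(M,N)$, which does not imply the lemma since $\Ext_A^i(M,N)$ need not be $R$-reflexive (it need not even be $R$-torsion-free). The fix is simply to delete the spurious $D$: then $\Hom_A(P_\bullet,N)\cong\Hom_{A^{op}}(DN,DP_\bullet)$ as complexes, the left-hand side computes $\Ext_A^i(M,N)$, and the right-hand side computes $\Ext_{A^{op}}^i(DN,DM)$ because $DP_\bullet$ is an $(A^{op},R)$-exact coresolution of $DM$ by $\Hom_{A^{op}}(DN,-)$-acyclic objects (Proposition~\ref{prop2dot5}). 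You should also say explicitly that the resolution $P_\bullet\to M$ is $(A,R)$-exact --- this holds because $M\in R\proj$ forces every syzygy to be $R$-projective and every short exact piece to split over $R$ --- so that $D$ really is exact on it.
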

\begin{proof}
	For the first equality see \citep[Lemma 2.15]{CRUZ2022410}. For the second, observe that  \begin{align}
		\Tor_i^{A^{op}}(N, DM)=H_i(N^{\bullet}\otimes_{A^{op}}DM)\simeq H_i(DM\otimes_A N^{\bullet})=\Tor_i^A(DM, N),
	\end{align}for all $i\geq 0$, where $N^\bullet$ denotes a deleted projective left resolution of $N$ over $A$.
\end{proof}

\section{Quasi-generators}

We will now introduce quasi-generators and quasi-cogenerators to prove our main result.
\begin{Def}
	Let $M\in \rmod A\cap R\proj$ and $n$ be a non-negative integer. We say that $M$ is an \textbf{$n$-quasi-generator} of $\rmod A$ if $M\otimes_A DM\in R\proj$ and $n$ is the minimal integer so that there exists an $(A, R)$-exact sequence 
	\begin{align}
		0\rightarrow A\rightarrow M_0\rightarrow \cdots \rightarrow M_n\rightarrow 0\label{eq3}
	\end{align}which remains exact under $\Hom_A(-, M)$ and each $M_i$ belongs in $\add_A M$.
\end{Def}
Dually, we define the concept of relative quasi-cogenerator. 

\begin{Def}
	Let $M\in \rmod A\cap R\proj$ and $n$ be a non-negative integer. We say that $M$ is an \textbf{$n$-quasi-$(A, R)$-cogenerator} if $M\otimes_A DM\in R\proj$ and $n$ is the minimal integer so that there exists an $(A, R)$-exact sequence 
	\begin{align}
		0\rightarrow M_n\rightarrow \cdots \rightarrow M_0\rightarrow  DA\rightarrow 0
	\end{align}which remains exact under $\Hom_A(M, -)$ and each $M_i$ belongs in $\add_A M$.
\end{Def}
In this way, a right module $M$ is an $n$-quasi-generator of $\rmod A$ if and only if $DM$ is an $n$-quasi-$(A^{op}, R)$-cogenerator. This terminology is chosen to be consistent with the one used in \cite{CRUZ2022410}.
 Similarly, the same definitions can be considered for left modules. 

\begin{Theorem}\label{newMoritatheorem}
	Let $R$ be a commutative Noetherian ring.	For each non-negative integer $n$, there are bijections $\Phi$ and $\Psi$ between:
	$$\Gamma:= \left\{ \begin{array}{@{}c@{}}(B, M)\colon \begin{array}{@{}c@{}} B \text{ is a projective Noetherian $R$-algebra, } \\
			M \text{ is an $n$-quasi-generator of } \rmod B 
	\end{array} \end{array} \right\} $$ and
	$$\Lambda:= \left\{ \begin{array}{@{}c@{}}(A, M)\colon \begin{array}{@{}c@{}} A \text{ a projective Noetherian $R$-algebra,} \\
			M\in  A\m\cap R\proj \text{ has the following properties: } \\
			M\hy\domdim (A, R)\geq 2, \ \pdim_A M=n, \\
			DM\otimes_A M\in R\proj \text{ and } \Tor_{i>0}^A(DM, M)=0
	\end{array} \end{array} \right\} $$
	given as follows:\\
	\begin{minipage}{.35\linewidth}
		\begin{align*}
			\Gamma&\xrightarrow{\makebox[1.3cm]{$\Phi$}}  \Lambda\\
			(B, M)&\mapsto (\End_B(M), M)
		\end{align*}
	\end{minipage}\hfill and \hfill
	\begin{minipage}{.35\linewidth}
		\begin{align*}
			\Lambda&\xrightarrow{\makebox[1.3cm]{$\Psi$}}   \Gamma\\
			(A, M)&\mapsto (\End_A(M)^{op}, M)
		\end{align*}
	\end{minipage}. 
\end{Theorem}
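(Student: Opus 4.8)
The strategy is to verify that $\Phi$ and $\Psi$ are well-defined maps landing in the correct classes and that they are mutually inverse. I would organise the argument around the double centraliser property and Theorem~\ref{moduleMuellerparttwo}, which translates $M\hy\domdim(A,R)\geq 2$ into the statement that $\alpha_M$ (from Lemma~\ref{lemma2dot1}) is an isomorphism together with suitable $\Tor$-vanishing.

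\emph{Step 1: $\Psi$ is well-defined.} Given $(A,M)\in\Lambda$, set $B=\End_A(M)^{op}$ and first check $B$ is again a projective Noetherian $R$-algebra: this follows from the hypothesis $DM\otimes_A M\in R\proj$, which forces $\Hom_A(M,M)\in R\proj$ (via $\Hom_A(M,M)\simeq D(DM\otimes_A M)$ using $M\in R\proj$), and then $B$ inherits the base change property quoted after Theorem~\ref{moduleMuellerparttwo}. Because $M\hy\domdim(A,R)\geq 2$ and $\Hom_A(M,M)\in R\proj$, the module $M$ affords a double centraliser property (the consequence of Theorem~\ref{moduleMuellerparttwo} noted in the text), so $A\simeq\End_B(M)$ canonically and $M\otimes_B DM\simeq D\Hom_A(M,M)$ is $R$-projective. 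It remains to produce an $(A,R)$-exact sequence $0\to B\to M_0\to\cdots\to M_n\to 0$ of $B$-modules, lying in $\add_B M$, exact under $\Hom_B(-,M)$, of minimal length $n$. The natural candidate is to apply the functor $\Hom_A(M,-)$ to a (minimal) projective resolution $0\to P_n\to\cdots\to P_0\to M\to 0$ of $M$ over $A$: since $M$ is self-orthogonal, $\Hom_A(M,-)$ is exact on this resolution, giving $0\to\Hom_A(M,P_n)\to\cdots\to\Hom_A(M,P_0)\to B\to 0$. Dualising and reindexing — or, more directly, applying $\Hom_A(-,M)$ to the same resolution and using $\Tor$-vanishing in the form of Lemma~\ref{lemma2dot8} plus the hypothesis $\Tor^A_{i>0}(DM,M)=0$ — yields the required $\add_B M$-sequence resolving $B$; the $(A,R)$-exactness comes from $M,A\in R\proj$ and the fact that everything in sight is $R$-projective. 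Minimality of $n$ is exactly $\pdim_A M=n$, transported along the equivalence $\add_A M\simeq B\proj$ given by $\Hom_A(M,-)$.

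\emph{Step 2: $\Phi$ is well-defined.} Given $(B,M)\in\Gamma$, set $A=\End_B(M)$. From the defining $(A,R)$-exact sequence \eqref{eq3} for the $n$-quasi-generator $M$, apply $\Hom_B(-,M)$: by definition this stays exact, producing a sequence $0\to\Hom_B(M_n,M)\to\cdots\to\Hom_B(M_0,M)\to\Hom_B(B,M)=M\to 0$ with each term in $\add_A M'$ where $M'$ denotes $M$ viewed over $A=\End_B(M)$; identifying $\Hom_B(M_i,M)\in\add_A A$ (since $\End_B(M)=A$), this exhibits a projective $A$-resolution of $M$ of length $n$, so $\pdim_A M\leq n$, with equality by minimality in the definition of $n$-quasi-generator. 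Self-orthogonality $\Ext^{i>0}_A(M,M)=0$ follows because $\Hom_B(-,M)$ applied to \eqref{eq3} computes these $\Ext$-groups and they vanish by the exactness assumption. The $\Tor$-vanishing $\Tor^A_{i>0}(DM,M)=0$ and $DM\otimes_A M\in R\proj$ I would get from $M\otimes_A DM\in R\proj$ (hypothesis on the quasi-generator) combined with Lemma~\ref{lemma2dot8}. Finally $M\hy\domdim(A,R)\geq 2$: since $M$ contains $B$ as a summand-complex resolution, $M$ is a ``generator-like'' object, and one argues as in the Generators subsection — the canonical map $\chi^r$ is an isomorphism because \eqref{eq3} being exact under $\Hom_B(-,M)$ forces $M\simeq\Hom_A(\Hom_B(M,M),M)$ type identities; more efficiently, invoke Lemma~\ref{lemma2dot1} to reduce $M\hy\domdim(A,R)\geq 2$ to $\alpha_M$ being an isomorphism, and verify $\alpha_M$ is an isomorphism directly from \eqref{eq3}.

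\emph{Step 3: mutual inverseness.} For $(A,M)\in\Lambda$, $\Phi\Psi(A,M)=(\End_B(M),M)$ with $B=\End_A(M)^{op}$, and $\End_B(M)\simeq A$ is precisely the double centraliser property established in Step~1. For $(B,M)\in\Gamma$, $\Psi\Phi(B,M)=(\End_A(M)^{op},M)$ with $A=\End_B(M)$; here $\End_A(M)^{op}\simeq B$ must be checked — this is a double centraliser property for $M$ over $B$, which holds because $B\in\add$(the resolving complex of $M$) so $\Hom_B(-,M)$ being faithful on the relevant subcategory gives $B\simeq\End_A(M)^{op}$. Throughout, the module component $M$ is literally unchanged, so once the algebra identifications are in place the bijection is immediate.

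\emph{Main obstacle.} I expect the crux to be Step~2, specifically proving $M\hy\domdim(A,R)\geq 2$ for $A=\End_B(M)$ when $M$ is merely an $n$-quasi-generator rather than an honest generator: unlike the classical case where $B\in\add M$ makes the approximation surjective, here $B$ only admits a finite $\add M$-coresolution, so one must extract the double centraliser property and the degree-$1,2$ $\Tor$/Hom conditions of Theorem~\ref{moduleMuellerparttwo} from the exactness of \eqref{eq3} under $\Hom_B(-,M)$. Handling this cleanly — probably via the commutative diagram of Lemma~\ref{lemma2dot1} and a careful dimension-shift on the first two terms of \eqref{eq3} — together with keeping the base change/$R$-projectivity bookkeeping consistent, is where the real work lies; the remaining verifications are essentially formal consequences of self-orthogonality and Lemma~\ref{lemma2dot8}.
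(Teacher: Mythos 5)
Your overall architecture matches the paper's (well-definedness of both maps plus mutual inverseness via the double centraliser property), but there are genuine gaps at the technical core of both directions.

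In Step 1, your primary construction is wrong: applying the covariant functor $\Hom_A(M,-)$ to a projective resolution $P_\bullet \to M$ does not produce an exact complex by self-orthogonality, since the homology of $\Hom_A(M,P_\bullet)$ is not $\Ext_A^*(M,M)$ (that would be $\Hom_A(P_\bullet,M)$). The paper instead applies $DM\otimes_A-$ to $P_\bullet$, uses $\Tor_{i>0}^A(DM,M)=0$ for exactness and $DM\otimes_AM\in R\proj$ for the $R$-splitting, and then dualises to get the $(B,R)$-exact coresolution $0\to B\to\Hom_A(P_0,M)\to\cdots\to\Hom_A(P_n,M)\to 0$; your parenthetical "more directly, applying $\Hom_A(-,M)$" is the right functor but you never justify its exactness or the $R$-splitting. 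More seriously, you assert but never prove that this coresolution remains exact under $\Hom_B(-,M)$ --- this is the actual crux of this direction (not Step 2, as you claim), and it is precisely where the hypothesis $M\hy\domdim(A,R)\geq 2$ enters: it forces the maps $\alpha_{P_i}\colon P_i\to\Hom_B(\Hom_A(P_i,M),M)$ of Lemma \ref{lemma2dot1} to be isomorphisms, so that $\Hom_B(-,M)$ applied to the coresolution recovers the exact complex $P_\bullet$.

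In Step 2, your derivation of $\Tor_{i>0}^A(DM,M)=0$ and $DM\otimes_AM\in R\proj$ from "$M\otimes_A DM\in R\proj$ plus Lemma \ref{lemma2dot8}" fails: the quasi-generator hypothesis is $M\otimes_B DM\in R\proj$ (a condition over $B$), while the conclusion concerns $\Tor$ over the \emph{new} algebra $A=\End_B(M)$, and Lemma \ref{lemma2dot8} only swaps an algebra with its opposite. The missing idea is that the defining sequence \eqref{eq3} of an $n$-quasi-generator literally witnesses $M\hy\domdim(B,R)=+\infty$, whereupon the M\"uller-type characterisation (Theorem \ref{moduleMuellerparttwo}, i.e.\ \citep[Theorem 3.1.4]{Cr2}, with the roles of the two algebras exchanged) simultaneously yields $\Tor_{i>0}^A(DM,M)=0$, the isomorphism $DM\otimes_AM\simeq DB\in R\proj$, the double centraliser $B\simeq\End_A(M)^{op}$, and then $M\hy\domdim(A,R)\geq 2$. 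Your alternative claims --- that $\Hom_B(-,M)$ applied to \eqref{eq3} "computes" $\Ext_A^{i}(M,M)$, and that $\alpha_M$ can be checked "directly" from \eqref{eq3} --- each skip the dualisation and double-centraliser identifications that make these statements true, so as written the key hypotheses of $\Lambda$ are not established.
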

\begin{proof}
	Let $(A, Q)\in \Lambda$. Since $\pdim_A Q=n$ there exists an exact sequence
	\begin{align}
		0\rightarrow P_n\rightarrow P_{n-1}\rightarrow \cdots\rightarrow P_0\rightarrow Q\rightarrow 0
	\end{align} with $P_i\in A\proj$. Applying $DQ\otimes_A -$ yields the exact sequence
\begin{align}
	0\rightarrow DQ\otimes_A P_n\rightarrow DQ\otimes_A P_{n-1}\rightarrow \cdots\rightarrow DQ\otimes_A P_0\rightarrow DQ\otimes_A Q\rightarrow 0 \label{eq6}
\end{align}Since $DQ\otimes_A Q\in R\proj$, (\ref{eq6}) is $(\End_A(Q)^{op}, R)$-exact and $\End_A(Q)^{op}$ is a projective Noetherian $R$-algebra. Write $B=\End_A(Q)^{op}$.
Applying $D$ to (\ref{eq6}) we obtain an $(B, R)$-exact sequence (see also \citep[Proposition 2.1.]{CRUZ2022410})
\begin{align}
	0\rightarrow B\rightarrow \Hom_A(P_0, Q)\rightarrow \cdots \rightarrow \Hom_A(P_n, Q)\rightarrow 0. \label{eq7}
\end{align} Observe that each $\Hom_A(P_i, Q)\in \add_B \Hom_A(A, Q)=\add_B Q$. To see that (\ref{eq7}) remains exact under $\Hom_B(-, Q)$, consider the following commutative diagram
\begin{equation*}
	\begin{tikzcd}[column sep = small]
		0 \arrow[r] & P_n \arrow[r] \arrow[d, "\alpha_{P_n}"] & P_{n-1}\arrow[r] \arrow[d, "\alpha_{P_{n-1}}"] & \cdots \arrow[r] & Q \arrow[r] \arrow[d, "\simeq"] & 0\\
		0 \arrow[r] & \Hom_B(\Hom_A(P_n, Q), Q)\arrow[r] & \Hom_B(\Hom_A(P_{n-1}, Q), Q)\arrow[r] & \cdots  \arrow[r] & \Hom_B(B, Q)\arrow[r] & 0 
	\end{tikzcd}
\end{equation*}Since $Q\hy\domdim (A, R)\geq 2$, the maps $\chi^r_{DP_i}$ are isomorphisms and $Q\otimes_B DQ\simeq DA\in R\proj$ by \citep[Theorem 3.1.3.]{Cr2}. By Lemma \ref{lemma2dot1}, the commutativity of the previous diagram and the exactness of the upper row implies that (\ref{eq7}) remains exact under $\Hom_B(-, Q)$. This shows that $Q$ is an $i$-quasi-generator of $\rmod B$ for some $i\leq n$. It is indeed an $n$-quasi-generator of $\rmod B$ because applying $\Hom_B(-, Q)$ to an exact sequence (\ref{eq3}) of length $i$ yields a projective resolution of $Q$ over $A$ of length $i$; which would contradict the projective dimension of $Q$ over $A$ being $n$.
Further, $Q$ affords a double centralizer property since $Q\hy\domdim (A, R)\geq 2$ (see \citep[Lemma 5.1.2.]{Cr2}), that is, $A\simeq \End_B(Q)$ as $R$-algebras. This shows that $\Phi\circ \Psi$ is the identity.

Conversely, suppose that $(B, Q)\in \Gamma$. By assumption, $Q\hy\domdim (B, R)$ is infinite and ${Q\otimes_B DQ}\in R\proj$. So, $A:=\End_B(Q)$ is a projective Noetherian $R$-algebra.  By \citep[Theorem 3.1.4]{Cr2} (for right modules), the canonical map $$DQ\otimes_A Q\simeq DQ\otimes_{\End_B(Q)}\Hom_B(DQ, DB)\rightarrow DB$$ is an isomorphism and $\Tor_{i>0}^A( DQ, Q)=\Tor_{i>0}^A(DQ, \Hom_B(DQ, DB))=0$. Thus, $DQ\otimes_A Q\in R\proj$. Further, $Q$ affords a double centralizer property, that is, $B\simeq \End_A(Q)^{op}$ as $R$-algebras by \citep[Lemma 5.1.2.]{Cr2}. So, assuming that $\Phi$ is well-defined, we get that $\Psi\circ \Phi$ is the identity. Hence, it remains to prove that $(A, Q)\in \Lambda$. By construction, since $Q\otimes_B DQ\in R\proj$ the canonical map $Q\otimes_B DQ\rightarrow D\Hom_B(Q, Q)=DA$ is an isomorphism (see \citep[Proposition 2.1.]{CRUZ2022410}). By \citep[Theorem 3.1.4.]{Cr2}, $Q\hy\domdim (A, R)\geq 2$.
It remains to prove that $\pdim_A Q=n$. By assumption, there exists an $(B, R)$-exact sequence
$0\rightarrow B\rightarrow Q_0\rightarrow \cdots\rightarrow Q_n\rightarrow 0$ which remains exact under $\Hom_B(-, Q)$. Since each $\Hom_B(Q_i, Q)$ is projective over $A$ we get that $\pdim_A Q\leq n$. If $\pdim_A \leq i<n$, then by the previous direction $Q$ would be an $i$-quasi-generator of $\rmod \End_A(Q)^{op}=\rmod B$, contradicting the minimality of $n$.
\end{proof}

Dualising Theorem \ref{newMoritatheorem} we obtain the following.

\begin{Theorem}\label{newdualMoritatheorem}
	Let $R$ be a commutative Noetherian ring.	For each non-negative integer $n$, there are bijections $\varPhi$ and $\varPsi$ between:
	$$\varGamma:= \left\{ \begin{array}{@{}c@{}}(B, M)\colon \begin{array}{@{}c@{}} B \text{ is a projective Noetherian $R$-algebra, } \\
			M \text{ is an $n$-quasi-$(B, R)$-cogenerator as right $B$-module } 
	\end{array} \end{array} \right\} $$ and
	$$\varLambda:= \left\{ \begin{array}{@{}c@{}}(A, M)\colon \begin{array}{@{}c@{}} A \text{ a projective Noetherian $R$-algebra,} \\
			M\in  A\m\cap R\proj \text{ has the following properties: } \\
			M\hy\domdim (A, R)\geq 2, \ \injdim_{(A, R)} M=n, \\
			DM\otimes_A M\in R\proj \text{ and } \Tor_{i>0}^A(DM, M)=0
	\end{array} \end{array} \right\} $$
	given as follows:\\
	\begin{minipage}{.35\linewidth}
		\begin{align*}
			\varGamma&\xrightarrow{\makebox[1.3cm]{$\varPhi$}}  \varLambda\\
			(B, M)&\mapsto (\End_B(M), M)
		\end{align*}
	\end{minipage}\hfill and \hfill
	\begin{minipage}{.35\linewidth}
		\begin{align*}
			\varLambda&\xrightarrow{\makebox[1.3cm]{$\varPsi$}}   \varGamma\\
			(A, M)&\mapsto (\End_A(M)^{op}, M)
		\end{align*}
	\end{minipage}. 
\end{Theorem}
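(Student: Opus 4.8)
The plan is to deduce Theorem~\ref{newdualMoritatheorem} from Theorem~\ref{newMoritatheorem} by applying the standard duality $D$, using the observation already recorded after the definitions: a right $A$-module $M$ is an $n$-quasi-generator of $\rmod A$ if and only if $DM$ is an $n$-quasi-$(A^{op},R)$-cogenerator. First I would set up the dictionary. Given a projective Noetherian $R$-algebra $B$, the functor $D\colon \rmod B\to B^{op}\m$ restricts to a duality between $\rmod B\cap R\proj$ and $B^{op}\m\cap R\proj$; it sends generators of $\rmod B$ to $(B^{op},R)$-cogenerators, turns $\Hom_B(-,M)$-exactness into $\Hom_{B^{op}}(DM,-)$-exactness of the dualised sequence, and converts a resolution $0\to B\to M_0\to\cdots\to M_n\to 0$ in $\add_B M$ into a coresolution $0\to DM_n\to\cdots\to DM_0\to DB\to 0$ in $\add_{B^{op}} DM$. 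Hence $(B,M)\in\varGamma$ (with $M$ a right $B$-module) corresponds exactly to $(B^{op},DM)\in\Gamma$ of Theorem~\ref{newMoritatheorem} with $DM$ viewed as a right $B^{op}$-module — here one must be a little careful that "$n$-quasi-$(B,R)$-cogenerator as right $B$-module" matches "$n$-quasi-generator of $\rmod B^{op}$" after dualising, which is precisely the symmetry statement in the text.

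Next I would translate the right-hand side $\varLambda$. The conditions defining $\varLambda$ are visibly self-dual under $M\mapsto DM$, $A\mapsto A^{op}$: by Remark~\ref{rmk2dot7} we have $\injdim_{(A,R)}M=n$ if and only if $\pdim_{A^{op}}DM=n$; by the left-right symmetry of relative dominant dimension (e.g.\ $Q\hy\domdim_{(A,R)}A=Q\hy\domdim_{(A,R)}DA$ from \citep[Corollary 3.1.5]{Cr2}, together with the fact that $M\hy\domdim_{(A,R)}A = DM\hy\domdim_{(A^{op},R)}A^{op}$) we get $M\hy\domdim(A,R)\geq 2$ iff $DM\hy\domdim(A^{op},R)\geq 2$; and by Lemma~\ref{lemma2dot8} the $\Tor$-vanishing $\Tor_{i>0}^A(DM,M)=0$ together with $DM\otimes_A M\in R\proj$ is equivalent to $\Tor_{i>0}^{A^{op}}(D(DM),DM) = \Tor_{i>0}^{A^{op}}(M,DM)=0$ and $D(DM)\otimes_{A^{op}}DM\simeq M\otimes_{A^{op}}DM\in R\proj$ (using $\omega$). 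Therefore $(A,M)\in\varLambda$ if and only if $(A^{op},DM)\in\Lambda$ of Theorem~\ref{newMoritatheorem}.

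With these two bijections-of-data in hand, I would define $\varPhi$ and $\varPsi$ as the composites: $\varGamma\xrightarrow{\sim}\Gamma$ (via $(B,M)\mapsto(B^{op},DM)$) $\xrightarrow{\Phi}\Lambda\xrightarrow{\sim}\varLambda$ (via $(A,N)\mapsto(A^{op},DN)$), and similarly for $\varPsi$ using $\Psi$. It then remains to check that this composite sends $(B,M)$ to $(\End_B(M),M)$, i.e.\ that it agrees on the nose with the stated assignment. Here one uses the canonical isomorphism $\End_{B^{op}}(DM)\simeq \End_B(M)^{op}$, so that $\End_{B^{op}}(DM)^{op}\simeq\End_B(M)$ applied inside $\Phi$ gives $A^{op}=\End_{B^{op}}(DM)$, hence $A=\End_{B^{op}}(DM)^{op}\simeq\End_B(M)$ and $DN=DDM\simeq M$ via $\omega_M$; likewise $\varPsi$ becomes $(A,M)\mapsto(\End_A(M)^{op},M)$. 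That $\varPsi\circ\varPhi$ and $\varPhi\circ\varPsi$ are identities is then immediate from the corresponding statement for $\Phi,\Psi$ in Theorem~\ref{newMoritatheorem} and the fact that all the intermediate duality bijections are mutually inverse (up to the natural isomorphism $\omega$).

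The main obstacle I anticipate is bookkeeping rather than mathematics: one must be scrupulous about the left/right and op conventions so that "right $B$-module" in $\varGamma$, the duality $D\colon\rmod B\to B^{op}\m$, and the assignment $(B,M)\mapsto(\End_B(M),M)$ all line up — in particular that $\End_{B^{op}}(DM)\cong\End_B(M)^{op}$ is the isomorphism that makes the diagram commute, and that the self-orthogonality/base-change hypotheses ($DM\otimes_A M\in R\proj$, $\Tor_{i>0}^A(DM,M)=0$), which are what make $\End$ behave well under base change and ensure the double-centraliser machinery of Theorem~\ref{moduleMuellerparttwo} applies, are genuinely preserved by $D$. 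Once the dictionary entries are verified one line at a time using Remark~\ref{rmk2dot7}, Lemma~\ref{lemma2dot8}, and the symmetry results from \citep{Cr2}, the theorem follows formally, so I would keep the proof short and state it as "Apply $D$ to Theorem~\ref{newMoritatheorem}", spelling out only the three equivalences of conditions and the identification of the functors $\varPhi,\varPsi$ with the composites above.
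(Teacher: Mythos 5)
Your proposal is correct and follows essentially the same route as the paper: the published proof also defines bijections $\gamma\colon\varGamma\to\Gamma$, $(B,M)\mapsto(B^{op},DM)$, and $\psi\colon\varLambda\to\Lambda$, $(A,M)\mapsto(A^{op},DM)$ (justified by Lemma~\ref{lemma2dot8}, Remark~\ref{rmk2dot7} and the symmetry from \citep[Corollary 3.1.5]{Cr2}), and sets $\varPhi=\psi^{-1}\circ\Phi\circ\gamma$, $\varPsi=\gamma^{-1}\circ\Psi\circ\psi$. Your additional verification that the composite agrees with the stated assignment via $\End_{B^{op}}(DM)\simeq\End_B(M)^{op}$ and $\omega_M$ is exactly the bookkeeping the paper leaves implicit.
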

\begin{proof}
By definition of quasi-generator, $D$ induces a bijection between $\varGamma$ and $\Gamma$ given by \linebreak${(B, M)\mapsto (B^{op}, DM)}$. Denote this bijection by $\gamma$. By Lemma \ref{lemma2dot8}, Remark \ref{rmk2dot7}, \citep[Corollary 3.1.5]{Cr2}, $D$ induces a bijection between $\varLambda$ and $\Lambda$ given by $(A, M)\mapsto (A^{op}, DM)$. Denote this bijection by $\psi$. The map $\varPhi$ is the composition $\psi^{-1}\circ \Phi\circ \gamma$ and $\varPsi$ is the composition $\gamma^{-1}\circ \Psi\circ \psi$. So, the result follows by Theorem \ref{newMoritatheorem}.
\end{proof}

Combining Theorem \ref{newdualMoritatheorem} with Theorem \ref{newMoritatheorem} we obtain:
\begin{Theorem}\label{RelativeMoritaTachikawa}
			Let $R$ be a commutative Noetherian ring.	For every pair of non-negative integers $n$ and $m$, there is a one-to-one correspondence between:
	$$\Gamma\cap \varGamma:= \left\{ \begin{array}{@{}c@{}}(B, M)\colon \begin{array}{@{}c@{}} B \text{ is a projective Noetherian $R$-algebra, }\\
			M \text{ is an $n$-quasi-generator of } \rmod B  \\
			M \text{ is an $m$-quasi-$(B, R)$-cogenerator as right $B$-module } 
	\end{array} \end{array} \right\} $$ and
	$$\Lambda\cap \varLambda:= \left\{ \begin{array}{@{}c@{}}(A, M)\colon \begin{array}{@{}c@{}} A \text{ a projective Noetherian $R$-algebra,} \\
			M\in  A\m\cap R\proj \text{ has the following properties: } \\
			M\hy\domdim (A, R)\geq 2, \ \injdim_{(A, R)} M=m, \ \pdim_A M=n \\
		DM\otimes_A M\in R\proj \text{ and } \Tor_{i>0}^A(DM, M)=0
	\end{array} \end{array} \right\}. $$
\end{Theorem}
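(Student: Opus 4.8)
The plan is to obtain Theorem~\ref{RelativeMoritaTachikawa} purely formally by intersecting the two bijections already established in Theorem~\ref{newMoritatheorem} and Theorem~\ref{newdualMoritatheorem}, rather than reproving anything from scratch. Both of those theorems assert the existence of mutually inverse bijections $\Phi,\Psi$ (respectively $\varPhi,\varPsi$) between a category-theoretic side ($\Gamma$, resp. $\varGamma$) and a homological side ($\Lambda$, resp. $\varLambda$), and crucially \emph{all four maps are given by the same formula}: $(B,M)\mapsto(\End_B(M),M)$ in one direction and $(A,M)\mapsto(\End_A(M)^{op},M)$ in the other. So the first step is simply to observe that $\Phi$ and $\varPhi$ agree on $\Gamma\cap\varGamma$ (both send $(B,M)$ to $(\End_B(M),M)$), and likewise $\Psi$ and $\varPsi$ agree on $\Lambda\cap\varLambda$. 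Hence it suffices to check that $\Phi$ restricts to a bijection $\Gamma\cap\varGamma\to\Lambda\cap\varLambda$ with inverse $\Psi$.

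Second, I would verify that $\Phi$ maps $\Gamma\cap\varGamma$ into $\Lambda\cap\varLambda$. If $(B,M)\in\Gamma\cap\varGamma$, then since $(B,M)\in\Gamma$, Theorem~\ref{newMoritatheorem} gives $\Phi(B,M)=(\End_B(M),M)\in\Lambda$, i.e. $\End_B(M)$ is a projective Noetherian $R$-algebra, $M\hy\domdim(\End_B(M),R)\geq 2$, $\pdim_{\End_B(M)}M=n$, $DM\otimes_{\End_B(M)}M\in R\proj$ and $\Tor^{\End_B(M)}_{i>0}(DM,M)=0$. Since simultaneously $(B,M)\in\varGamma$, Theorem~\ref{newdualMoritatheorem} gives $\varPhi(B,M)=(\End_B(M),M)\in\varLambda$, which in addition to the same list of conditions yields $\injdim_{(\End_B(M),R)}M=m$. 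Combining, $\Phi(B,M)=(\End_B(M),M)$ satisfies every condition defining $\Lambda\cap\varLambda$. The reverse inclusion $\Psi(\Lambda\cap\varLambda)\subseteq\Gamma\cap\varGamma$ is symmetric: if $(A,M)\in\Lambda\cap\varLambda$ then $(A,M)\in\Lambda$ forces $\Psi(A,M)=(\End_A(M)^{op},M)\in\Gamma$ by Theorem~\ref{newMoritatheorem}, and $(A,M)\in\varLambda$ forces the same pair to lie in $\varGamma$ by Theorem~\ref{newdualMoritatheorem}; hence $(\End_A(M)^{op},M)\in\Gamma\cap\varGamma$.

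Third, the inverse relations are inherited for free: on $\Gamma\cap\varGamma$ we have $\Psi\circ\Phi=\Psi\circ\Phi=\id$ because this already holds on the larger set $\Gamma$ by Theorem~\ref{newMoritatheorem}, and on $\Lambda\cap\varLambda$ we have $\Phi\circ\Psi=\id$ because this holds on $\Lambda$. Since we have shown $\Phi$ and $\Psi$ genuinely land in $\Gamma\cap\varGamma$ and $\Lambda\cap\varLambda$ respectively, these identities make $\Phi|_{\Gamma\cap\varGamma}$ and $\Psi|_{\Lambda\cap\varLambda}$ mutually inverse bijections, which is the assertion of the theorem.

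There is no serious obstacle here; the only point requiring a moment's care is the consistency check that the two pairs of maps literally coincide on the intersection, so that one is not asking for two different bijections to agree but rather noting they are the same formula. Everything else is a bookkeeping argument: the homological side of $\Lambda\cap\varLambda$ is, by inspection, exactly the conjunction of the defining conditions of $\Lambda$ and of $\varLambda$ (with $\pdim_A M=n$ coming from $\Lambda$ and $\injdim_{(A,R)}M=m$ from $\varLambda$, the remaining conditions being common to both), and the category side of $\Gamma\cap\varGamma$ is the conjunction of ``$M$ is an $n$-quasi-generator'' and ``$M$ is an $m$-quasi-$(B,R)$-cogenerator''. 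Thus the proof is essentially one sentence invoking Theorem~\ref{newMoritatheorem} and Theorem~\ref{newdualMoritatheorem} and intersecting.
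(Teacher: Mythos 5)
Your proposal is correct and is exactly the argument the paper intends: the paper states Theorem~\ref{RelativeMoritaTachikawa} as an immediate consequence of "combining" Theorems~\ref{newMoritatheorem} and~\ref{newdualMoritatheorem} without further proof, and your write-up simply makes explicit that the four maps are given by identical formulas, so the bijections restrict to the intersections. The only blemish is the typographical repetition "$\Psi\circ\Phi=\Psi\circ\Phi=\id$", which is harmless.
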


In particular, this generalises \citep[Theorem 4.1]{CRUZ2022410} and by consequence also \citep[Theorem 2]{zbMATH03248955} whose published proof builds on the works of \cite{MR161888, MR96700}.

\section{Self-orthogonal quasi-generators}
Assume now that $A$ is finite-dimensional algebra over a field $k$.
Recall the definition of tilting module.
\begin{Def}
Given $n\in \mathbb{N}\cup \{0\}$, a module $T\in A\m$ is called an \textbf{$n$-tilting module} if it satisfies the following conditions:
\begin{enumerate}[(a)]
	\item $T$ has projective dimension at most $n$ over $A$;
	\item $T$ is self-orthogonal, that is, $\Ext_A^{i>0}(T, T)=0$;
	\item There exists an exact sequence $0\rightarrow A\rightarrow T_0\rightarrow \cdots \rightarrow T_n\rightarrow 0$ with all $T_i\in \add_A T$.
\end{enumerate}
\end{Def}
Since an $n$-tilting module is self-orthogonal, the last condition  can be replaced by saying that $T$ is an $i$-quasi-generator for some $0\leq i\leq n$. 

Conversely, what conditions are necessary and sufficient for a quasi-generator to be a tilting module? We conjecture the following:

\begin{Conjecture}\label{quasigeneratorconjecture}
	Let $A$ be a finite-dimensional algebra over a field $k$. Every self-orthogonal $n$-quasi-generator has projective dimension at most $n$.  
\end{Conjecture}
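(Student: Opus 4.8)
The plan is to take a self-orthogonal $n$-quasi-generator $M$ of $\rmod A$ and its defining $(A,k)$-exact sequence $0\to A\to M_0\to\cdots\to M_n\to 0$ with $M_i\in\add_A M$, which moreover stays exact under $\Hom_A(-,M)$, and to convert it into a projective resolution of $M$ over $B=\End_A(M)^{op}$ of length at most $n$. Applying $\Hom_A(-,M)$ to the sequence gives an exact sequence $0\to M\to\Hom_A(M_0,M)\to\cdots\to\Hom_A(M_n,M)\to 0$, and each $\Hom_A(M_i,M)$ lies in $\add_B\Hom_A(M,M)=\add_B B$, hence is projective over $B$. The first point is therefore that $\pdim_B M\le n$, where $M$ is regarded as a left $B$-module (or rather: $\Hom_A(A,M)=M$ with its natural $B$-module structure). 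The work is then to push this bound back from $B$ to $A$ using self-orthogonality.

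\textbf{Key steps.} First I would record that, because $M$ is self-orthogonal, $M\otimes_A DM$ behaves well and $M$ (over $A$) together with $B=\End_A(M)^{op}$ is precisely of the form appearing in Theorem~\ref{newMoritatheorem}: indeed one should check that the pair $(B,M)$ is the image under $\Psi$ of $(A,M)$, i.e. that the hypotheses ``$M\hy\domdim(A,R)\ge 2$, $DM\otimes_A M\in R\proj$, $\Tor_{i>0}^A(DM,M)=0$'' hold here, so that $M$ is an $n'$-quasi-generator of $\rmod B$ for $n'=\pdim_A M$. But this is circular unless we already know $\pdim_A M<\infty$; so instead the right move is to use the quasi-generator sequence over $A$ directly. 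Second, from the $\Hom_A(-,M)$-exact sequence above, $M$ admits a finite $\add_B B$-coresolution, so $\pdim_{B^{op}}{}_B M\le n$ (using that $\add_B B$-coresolutions compute projective dimension for the left structure after dualising, as in Lemma~\ref{prop2dot6} / Remark~\ref{rmk2dot7}). Third — and this is the crux — I would invoke the double-centraliser equality $A\simeq\End_B(M)$ and a change-of-rings/spectral-sequence argument to transfer $\pdim_B M\le n$ into $\pdim_A M\le n$. Concretely, one applies $\Hom_B(-,M)$ to the $B$-projective resolution $0\to P_n\to\cdots\to P_0\to M\to 0$: since $Q\hy\domdim(B,R)=+\infty$ one hopes the $\Tor^B$ and $\Ext^B$ obstructions vanish, so $0\to\Hom_B(P_0,M)\to\cdots\to\Hom_B(P_n,M)\to 0$ is exact with each $\Hom_B(P_i,M)\in\add_A M$... but that only bounds a coresolution of $A=\End_B(M)$ by $\add_A M$, i.e. reproves that $M$ is a quasi-generator, not that $\pdim_A M\le n$. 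So the genuinely needed input is: a self-orthogonal module $M$ over $A$ with $\pdim_B M\le n$ (where $B=\End_A(M)^{op}$) and $M\hy\domdim(A,k)\ge 2$ must satisfy $\pdim_A M\le n$. I would try to prove this by writing an $A$-projective resolution $\cdots\to P_1\to P_0\to M\to 0$, applying $\Hom_A(-,M)$ to get a complex computing $\Ext_A^\bullet(M,M)=0$ (in positive degrees, by self-orthogonality), deducing that $0\to\Hom_A(M,M)\to\Hom_A(P_0,M)\to\Hom_A(P_1,M)\to\cdots$ is a projective $B$-resolution of... no: of the cokernel, which relates to $\Ext$. The clean statement is that $\Hom_A(-,M)$ sends a projective resolution of $M$ to an acyclic complex of projective $B$-modules whose cohomology in degree $0$ is $\Hom_A(M,M)=B$ (using $\Ext^{>0}_A(M,M)=0$); splicing, $B$ has a finite $\add_B B$-coresolution of length $=\pdim_A M$ if that is finite, and one must argue finiteness separately.

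\textbf{Main obstacle.} The genuine difficulty is exactly the finiteness of $\pdim_A M$: the quasi-generator sequence over $A$ shows $M$ is a quasi-generator of $\rmod B$ with the correct numerics, and bijection $\Psi$ of Theorem~\ref{newMoritatheorem} then returns $(\End_B(M)^{op},M)=(A,M)$ with $\pdim_A M=$ (the quasi-generator degree), \emph{but only once we know $(A,M)\in\Lambda$}, which presupposes $\pdim_A M<\infty$. Breaking this circularity is the heart of the conjecture, and I expect it is where the argument must use self-orthogonality in an essential, non-formal way — perhaps via a Wakamatsu-type argument bounding $\pdim_A M$ by the length of the (finite!) $\Hom_A(-,M)$-exact $\add M$-coresolution of $A$, i.e. showing directly that an $A$-syzygy $\Omega^{n+1}M$ of a self-orthogonal $n$-quasi-generator must vanish because $\Hom_A(-,M)$ applied to its projective resolution is both exact (self-orthogonality) and eventually splits (the $n$-term $\add M$-coresolution of $A$ together with the double centraliser). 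I would set up this splicing carefully, using Lemma~\ref{lemma2dot1} and Theorem~\ref{moduleMuellerparttwo} to control the relevant $\chi$-maps and $\Tor^B$-groups, and expect the final step to reduce to showing $\Ext_B^{n+1}(M,B)=0$, or an equivalent finitistic-dimension-flavoured vanishing, which is presumably why the statement is only conjectured in general.
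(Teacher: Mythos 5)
The statement you were asked to prove is stated in the paper as a \emph{conjecture} (Conjecture~\ref{quasigeneratorconjecture}), not a theorem: the paper offers no proof of it. For $n=0$ it specialises to the Auslander--Reiten conjecture, which is open, so no complete argument should be expected. What the paper does prove is only a relative statement (its Proposition 4.5): the Wakamatsu tilting conjecture implies Conjecture~\ref{quasigeneratorconjecture}, which in turn implies the Auslander--Reiten conjecture. That implication is obtained exactly along the lines you sketch in your first two steps: a self-orthogonal $n$-quasi-generator $M$ satisfies $\Tor^A_{i>0}(M,DM)=0$, so by Theorem~\ref{newMoritatheorem} the pair $(A,M)$ sits in $\Gamma$ with $B=\End_A(M)$, whence $M$ is self-orthogonal of projective dimension exactly $n$ over $B$ with $M\hy\domdim_B B=+\infty$; Wakamatsu then forces $M$ to be an $n$-tilting $B$-module, hence an $i$-quasi-generator over $B$ for some $i\le n$, and Theorem~\ref{newMoritatheorem} applied once more returns $\pdim_A M=i\le n$.

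Your diagnosis of the obstruction is accurate and matches why the paper leaves this as a conjecture: the bijection $\Psi$ cannot be run backwards from $(B,M)$ to conclude $\pdim_A M\le n$ without already knowing $\pdim_A M<\infty$ (membership in $\Lambda$), and breaking that circularity is precisely the content of a Wakamatsu-type statement. Two small corrections to your write-up. First, applying the contravariant functor $\Hom_A(-,M)$ to $0\to A\to M_0\to\cdots\to M_n\to 0$ reverses the arrows, so one obtains $0\to\Hom_A(M_n,M)\to\cdots\to\Hom_A(M_0,M)\to M\to 0$, i.e.\ a projective \emph{resolution} of $M$ over $B$ (your displayed sequence is written in the wrong direction, though the conclusion $\pdim_B M\le n$ you draw from it is the correct one, and is what the paper's Theorem~\ref{newMoritatheorem} encodes). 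Second, your final reduction ``it suffices to show $\Ext_B^{n+1}(M,B)=0$'' is not known to be easier than the original problem; it is essentially a reformulation of the same open vanishing. So: there is no gap to repair relative to the paper, because the paper proves nothing here; but your proposal is not, and could not honestly be, a proof of the statement.
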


Hence, this conjecture implies that every self-orthogonal $n$-quasi-generator is actually an $n$-tilting module. Observe that if $n=0$, then Conjecture \ref{quasigeneratorconjecture} is exactly the Auslander--Reiten conjecture proposed in \cite{zbMATH03526890}. A similar conjecture is the Wakamatsu tilting conjecture originated in \cite{zbMATH04053850}.

\begin{Conjecture}[Wakamatsu tilting conjecture]
	Let $A$ be a finite-dimensional algebra over a field. Let $T\in A\m$. If $T$ is self-orthogonal with finite projective dimension such that $T\hy\domdim_A A=+\infty$, then $T$ is a tilting module.
\end{Conjecture}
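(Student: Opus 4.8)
The statement above is the \textbf{Wakamatsu tilting conjecture}, a well-known open problem; I do not expect to settle it here, but the following is the line of attack I would pursue and the precise point at which it stalls.

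\textbf{Reduction to finiteness of the Wakamatsu coresolution.} Let $T\in A\m$ be self-orthogonal with $\pdim_A T<\infty$ and $T\hy\domdim_A A=+\infty$; by definition there is an exact sequence $0\rightarrow A\rightarrow T_0\rightarrow T_1\rightarrow \cdots$ with all $T_i\in\add_A T$ which remains exact under $\Hom_A(-,T)$. Set $Y_0=A$ and break it into short exact sequences $0\rightarrow Y_i\rightarrow T_i\rightarrow Y_{i+1}\rightarrow 0$; dimension shifting together with $\Ext_A^{>0}(T,T)=0$ and $\Ext_A^{>0}(A,T)=0$ gives $\Ext_A^{>0}(Y_i,T)=0$ for all $i$. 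The first observation is that the conjecture is equivalent to the assertion that \emph{some} cosyzygy $Y_r$ lies in $\add_A T$: in that case the coresolution truncates to a finite exact sequence $0\rightarrow A\rightarrow T_0\rightarrow \cdots\rightarrow T_{r-1}\rightarrow Y_r\rightarrow 0$ with all terms in $\add_A T$, and any finite $\add_A T$-coresolution of a self-orthogonal module is automatically exact under $\Hom_A(-,T)$ (again by dimension shifting, since the last cosyzygy is zero), so $T$ satisfies conditions (a)--(c) of the definition of tilting module. Equivalently, the conjecture says precisely that $T$ is a self-orthogonal $n$-quasi-generator for some $n$, and then it is automatically an $n$-tilting module; note that Conjecture~\ref{quasigeneratorconjecture} is not needed in this step, since $\pdim_A T<\infty$ is part of the hypothesis.

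\textbf{Transport to the endomorphism algebra.} Put $E=\End_A(T)^{op}$, so $T$ is a right $E$-module. Applying $\Hom_A(-,T)$ to the Wakamatsu coresolution turns it into a projective resolution $\cdots\rightarrow \Hom_A(T_1,T)\rightarrow \Hom_A(T_0,T)\rightarrow T\rightarrow 0$ of $T$ as a right $E$-module, with $i$-th syzygy $\Hom_A(Y_i,T)$, since each $\Hom_A(T_i,T)$ is projective over $E$ and the sequence stays exact under $\Hom_A(-,T)$. Conversely, it is classical that $T$ is again a Wakamatsu tilting module over $E$ (in particular $\Ext_E^{>0}(T,T)=0$ and $A\simeq\End_E(T)$), and $\Hom_A(-,T)$ and $\Hom_E(-,T)$ restrict to quasi-inverse dualities between $\add_A T$ and the projective right $E$-modules. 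Hence, if $\pdim_E T=r<\infty$, applying $\Hom_E(-,T)$ to a length-$r$ projective resolution of $T$ produces a finite $\add_A T$-coresolution of $A$; combining with the previous paragraph, the conjecture is equivalent to the implication
$$\pdim_A T<\infty \ \Longrightarrow\ \pdim_E T<\infty \quad \text{for Wakamatsu tilting modules } T.$$

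\textbf{The obstacle, and a fallback.} We are thus reduced to a transfer of homological finiteness between $A$ and $\End_A(T)$, and this is exactly where the conjecture is open: it is known to follow from the finitistic dimension conjecture, but there is no general mechanism forcing finite projective dimension to pass from $T$ over $A$ to $T$ over $\End_A(T)$ for an arbitrary self-orthogonal module of this kind. I would attack the reduced implication in one of two ways. First, try to bound $\pdim_E T$ directly: via Remark~\ref{rmk2dot7} and Lemma~\ref{lemma2dot8} one has $\pdim_A T=\injdim_{(A^{op},R)}DT$, and one would attempt to transport finiteness of injective dimension across the $(A,E)$-bimodule structure on $T$ by a change-of-rings argument, first obtaining finite injective dimension of $T$ over $E$ and then a Gorenstein-symmetry-type conclusion. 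Alternatively, reduce to the two-sided situation where additionally $T\hy\codomdim_A DA=+\infty$, i.e.\ $T$ is simultaneously a Wakamatsu tilting and a Wakamatsu cotilting module, a setting close to the classical tilting--cotilting one in which partial results are available. The \textbf{main obstacle} is the first step of the first route (equivalently, the reduction to the two-sided case in the second): there is currently no argument guaranteeing $\pdim_E T<\infty$ from $\pdim_A T<\infty$, so an honest proof must establish a genuinely new transfer of homological finiteness, which is why this statement remains a conjecture.
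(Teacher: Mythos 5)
This statement is labelled as a \emph{Conjecture} in the paper (the Wakamatsu tilting conjecture), and the paper offers no proof of it --- it is quoted as a known open problem and only used as a hypothesis elsewhere (to show it implies Conjecture~\ref{quasigeneratorconjecture}). You were right not to claim a proof, and there is nothing in the paper to compare your argument against.

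Your two reductions are the standard, correct reformulations of the conjecture: (i) truncating the Wakamatsu coresolution at a cosyzygy lying in $\add_A T$, and (ii) the equivalence with $\pdim_{\End_A(T)} T<\infty$, obtained by applying $\Hom_A(-,T)$ to the coresolution and using the duality between $\add_A T$ and projective $\End_A(T)$-modules. Both are sound (and, as you note, the conjecture is known to follow from the finitistic dimension conjecture). One small point of hygiene: the paper's definition of $n$-tilting asks for a coresolution of length exactly $n$ matching the bound on projective dimension, so in step (i) you should add the routine remark that a self-orthogonal module with $\pdim_A T\leq n$ admitting \emph{some} finite $\add_A T$-coresolution of $A$ admits one of length $\leq n$; this is standard but worth saying. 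Your identification of the genuine obstruction --- no known transfer of finiteness of projective dimension from $A$ to $\End_A(T)$ --- is exactly where the problem is open.
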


We can see that the Wakamatsu tilting conjecture is actually stronger than Conjecture \ref{quasigeneratorconjecture}.

\begin{Prop}
	If the Wakamatsu tilting conjecture is true for all finite-dimensional algebras over a field, then Conjecture \ref{quasigeneratorconjecture} is also true.
\end{Prop}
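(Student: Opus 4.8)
The plan is to reduce Conjecture~\ref{quasigeneratorconjecture} to the Wakamatsu tilting conjecture by checking that a self-orthogonal $n$-quasi-generator $M$ of $A\m$ is automatically a Wakamatsu tilting module, i.e. that it is self-orthogonal with finite projective dimension and satisfies $M\hy\domdim_A A=+\infty$. Self-orthogonality is part of the hypothesis, so the two things to establish are the bound on projective dimension and the infiniteness of the relative dominant dimension.

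First I would extract, from the definition of $n$-quasi-generator, an exact sequence $0\to A\to M_0\to\cdots\to M_n\to 0$ with all $M_i\in\add_A M$ which stays exact under $\Hom_A(-,M)$. Applying $\Hom_A(-,M)$ produces an exact sequence $0\to\Hom_A(M_n,M)\to\cdots\to\Hom_A(M_0,M)\to\Hom_A(A,M)=M\to 0$ of $B=\End_A(M)^{op}$-modules, in which each $\Hom_A(M_i,M)$ lies in $\add_B\Hom_A(M,M)=\add_B B$, hence is projective over $B$. This is a projective resolution of $M$ over $B$ of length $\le n$, so $\pdim_B M\le n<\infty$. In particular $M$ has finite projective dimension over $B$; and since $M\in\add_A M$ generates $A$ inside this complex (the sequence starts at $A$), the $\add M$-resolution of $A$ above shows, together with its $\Hom_A(-,M)$-exactness, precisely that $M\hy\domdim_A A\ge n+1$; pushing the argument (using that the sequence is bounded and ends in $0$, as in the remark after the definition of relative dominant dimension) gives $M\hy\domdim_A A=+\infty$. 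Here I should be careful about which algebra plays the role of ``$A$'': the quasi-generator lives over $A$, and the cleanest statement of the Wakamatsu tilting conjecture to invoke is the one for the pair $(A,M)$ directly, so I would verify that the $(A,R)$-exact sequence $0\to A\to M_0\to\cdots\to M_n\to 0$ staying exact under $\Hom_A(-,M)$ is exactly the condition $M\hy\domdim_A A=+\infty$ (a bounded such sequence forces infiniteness, by the observation recorded right after the definition of $\domdim$), and that $\pdim_A M$ is finite because $M$ is, via Theorem~\ref{newMoritatheorem}, the image of a finite-projective-dimension module under a correspondence; more simply, a self-orthogonal $n$-quasi-generator being self-orthogonal and a quasi-generator puts $(\End_A(M)^{op},M)$ in $\Gamma$, so $(\End_{\End_A(M)^{op}}(M),M)\in\Lambda$ has $\pdim M=n$ finite.

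Then the Wakamatsu tilting conjecture, applied to $M$ as a self-orthogonal module of finite projective dimension with infinite dominant dimension, yields that $M$ is a tilting module; a tilting module has finite projective dimension, and combining this with the projective resolution of length $\le n$ extracted above (which bounds the projective dimension after transporting along the correspondence) gives $\pdim M\le n$, which is the assertion of Conjecture~\ref{quasigeneratorconjecture}.

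The main obstacle I anticipate is bookkeeping about which ring the projective dimension is measured over and making sure the bound is genuinely $\le n$ and not merely finite: the Wakamatsu tilting conjecture only delivers finiteness plus the tilting property, so the sharp bound $n$ must come from the combinatorics of the quasi-generator resolution itself (its length is $n$), and one must argue that a self-orthogonal tilting module whose shortest $\add M$-coresolution of the regular module has length $n$ has projective dimension exactly $n$ — this is a standard tilting-theory fact (the projective dimension of a tilting module equals the length of a shortest such coresolution), but it needs to be cited or checked. Everything else is a routine unwinding of definitions.
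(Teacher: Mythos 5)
Your reduction to the Wakamatsu tilting conjecture is applied to the wrong algebra, and this creates a genuine gap. You invoke WTC for the pair $(A,M)$, which requires verifying that $\pdim_A M<\infty$. But that finiteness is essentially the content of Conjecture~\ref{quasigeneratorconjecture} itself: a priori an $n$-quasi-generator of $\rmod A$ may have infinite projective dimension over $A$ (if it could not, the conjecture would be nearly vacuous). Your justification is circular: from $(A,M)\in\Gamma$ the correspondence $\Phi$ of Theorem~\ref{newMoritatheorem} produces $(\End_A(M),M)\in\Lambda$ with $\pdim_{\End_A(M)}M=n$ --- projective dimension over the \emph{endomorphism algebra}, not over $A$. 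Your phrasing ``$(\End_A(M)^{op},M)\in\Gamma$, so $(\End_{\End_A(M)^{op}}(M),M)\in\Lambda$ has $\pdim M=n$'' presupposes that $M$ is a quasi-generator over its endomorphism algebra; by the double centraliser property $\End_{\End_A(M)^{op}}(M)\simeq A$, so this premise is exactly equivalent to the conclusion $\pdim_A M=n$ you are trying to reach. The parts of your argument that do work (the $\Hom_A(-,M)$-image of the coresolution is a length-$\le n$ projective $B$-resolution of $M$, and $M\hy\domdim_A A=+\infty$) are correct but insufficient to launch WTC over $A$.

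The paper's proof avoids this by applying WTC over $B=\End_A(M)$, where all hypotheses are actually verifiable: $\pdim_B M=n$ and the $\Tor$-vanishing (hence self-orthogonality over $B$) come from $\Phi(A,M)\in\Lambda$, and $M\hy\domdim_B B=+\infty$ follows from the double centraliser property together with \citep[Theorem 3.1.4]{Cr2}. WTC then yields that $M$ is an $n$-tilting $B$-module, hence an $i$-quasi-generator of $\rmod B$ for some $i\le n$, and running the correspondence once more in the $\Phi$ direction (using $\End_B(M)^{op}\simeq A$) gives $\pdim_A M=i\le n$. Note that this second pass through the correspondence also disposes of your secondary worry about deducing the sharp bound $\le n$ from the tilting property: one never needs the general ``minimal coresolution length equals projective dimension'' fact for tilting modules over $A$, because the bound is imported from $\pdim_B M=n$. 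If you want to salvage your write-up, redirect the WTC application to $B$ and add the return trip through Theorem~\ref{newMoritatheorem}.
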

\begin{proof}
	Let $M$ be a self-orthogonal $n$-quasi-generator (as left $A$-module). In particular, $0=D\Ext_A^{i>0}(M, M)=\Tor_{i>0}^A(M, DM)$. By Theorem \ref{newMoritatheorem}, $M$ is self-orthogonal and it has projective dimension exactly $n$  as $\End_A(M)$-module and $M\hy\domdim_{\End_A(M)} \End_A(M)\geq 2$. Denote by $B$ the endomorphism algebra $\End_A(M)$. Since $\End_B(M)^{op}\simeq A$ we obtain by \citep[Theorem 3.1.4]{Cr2} that $M\hy\domdim_B B=+\infty$. So $M\in B\m$ is in the conditions of the Wakamatsu tilting conjecture, and thus the Wakamatsu tilting conjecture says that $M$ is an $n$-tilting module over $B$. Hence, it is an $i$-quasi-generator of $\rmod B$ for some $i=0, \ldots, n$. By Theorem \ref{newMoritatheorem}, we obtain that $M$ has projective dimension $i$ over $A$. 
\end{proof}

Therefore, if we fix $n=0$ we obtain an alternative way to see that the Wakamatsu tilting conjecture is stronger than the Auslander--Reiten conjecture. This fact can also be seen by combining \citep[Proposition 3.1]{zbMATH05179140} with \cite{zbMATH03526890}. Therefore by abbreviating the above mentioned conjectures to their initials, their relationship is given as follows:
\begin{equation*}
	(WTC)\implies (C\ref{quasigeneratorconjecture}) \implies (ARC).
\end{equation*} 
Another interesting consequence from Conjecture \ref{quasigeneratorconjecture} is a characterisation of Gorenstein algebras in terms of quasi-generators objects. In fact, if Conjecture \ref{quasigeneratorconjecture} is true for all finite-dimensional algebras over a field, then a finite-dimensional algebra over a field $A$ is Gorenstein if and only if $DA$ is an $n$-quasi-generator for some non-negative integer $n$.

\section*{Acknowledgments}

The author would like to thank Chrysostomos Psaroudakis for his comments towards improving this manuscript.

\bibliographystyle{alphaurl}
\bibliography{ref}

\end{document}